\documentclass[11pt,a4paper]{amsart}

\usepackage[english]{babel}
\usepackage[utf8]{inputenc}
\usepackage[T1]{fontenc}
\usepackage{amsmath,amssymb,amsthm,mathtools}
\usepackage{enumitem}
\usepackage{booktabs}
\usepackage{aliascnt}
\usepackage{hyperref}
\usepackage{cleveref}
\usepackage{graphicx}
\usepackage[margin=2.5cm]{geometry}
\usepackage{xcolor}
\usepackage{algorithm}
\usepackage{algpseudocode}
\usepackage{float}
\usepackage{caption}
\usepackage{subcaption}
\usepackage{multirow}
\usepackage{placeins}
\usepackage{tikz}
\usetikzlibrary{positioning,arrows.meta}

\newtheorem{theorem}{Theorem}[section]

\newaliascnt{proposition}{theorem}
\newtheorem{proposition}[proposition]{Proposition}
\aliascntresetthe{proposition}

\newaliascnt{lemma}{theorem}
\newtheorem{lemma}[lemma]{Lemma}
\aliascntresetthe{lemma}

\newaliascnt{corollary}{theorem}
\newtheorem{corollary}[corollary]{Corollary}
\aliascntresetthe{corollary}

\newaliascnt{conjecture}{theorem}
\newtheorem{conjecture}[conjecture]{Conjecture}
\aliascntresetthe{conjecture}

\theoremstyle{definition}
\newaliascnt{definition}{theorem}
\newtheorem{definition}[definition]{Definition}
\aliascntresetthe{definition}

\newaliascnt{remark}{theorem}
\newtheorem{remark}[remark]{Remark}
\aliascntresetthe{remark}

\newaliascnt{example}{theorem}

\aliascntresetthe{example}

\newaliascnt{observation}{theorem}
\newtheorem{observation}[observation]{Observation}
\aliascntresetthe{observation}

\newcommand{\R}{\mathbb{R}}

\newcommand{\one}{\mathbf{1}}
\newcommand{\Var}{\operatorname{Var}}

\newcommand{\spec}{\operatorname{spec}}

\newcommand{\Tr}{\operatorname{Tr}}

\title[Phase Transition and Spectral Structure in Finite Free Information Inequalities]{Spectral Structure in Finite Free Information Inequalities and $p$-Stam Phase Transitions}

\author{Baran Hashemi}
\address{Max Planck Institute for Mathematics in the Sciences, Leipzig, Germany}
\email{baran.hashemi@mis.mpg.de}

\begin{document}

\begin{abstract}
Using FlowBoost, a closed-loop deep generative optimization framework for extremal structure discovery, we investigate $\ell^p$-generalizations of the finite free Stam inequality for real-rooted polynomials under finite free additive convolution~$\boxplus_n$. At $p=2$, FlowBoost finds the Hermite pair as the unique equality case and reveals the spectral structure of the linearized convolution map at this extremal point. As a result, we conjecture that the singular values of the doubly stochastic coupling matrix~$E_n$ on the mean-zero subspace are ${2^{-k/2}:k=1,\ldots,n-1}$, independent of~$n$. Conditional on this conjecture, we obtain a sharp local stability constant and the finite free CLT convergence rate, both uniform in~$n$. We introduce a one-parameter family of $p$-Stam inequalities using $\ell^p$-Fisher information and prove that the Hermite pair itself violates the inequality for every $p>2$, with the sign of the deficit governed by the $\ell^p$-contraction ratio of~$E_n$. Systematic computation via FlowBoost supports the conjecture that $p^*\!=2$ is the sharp critical exponent. For $p<2$, the extremal configurations undergo a bifurcation, meaning that they become non-matching pairs with bimodal root structure, converging back to the Hermite diagonal only as $p\to 2^-$. Our findings demonstrate that FlowBoost, can be an effective tool of mathematical discovery in infinite-dimensional extremal problems.
\end{abstract}

\maketitle

\tableofcontents

\section{Introduction}\label{sec:intro}

This project was initiated by the \emph{First~Proof} benchmark~\cite{abouzaid2026proof}, a collection of ten open research-level problems contributed by research mathematicians as a challenge to AI systems. That work demonstrated the potential and the difficulty of AI-driven mathematical discovery, and motivated us to apply FlowBoost~\cite{FlowBoost25} and Abductive Hypothesis Testing (AHT)~\cite{hashemi2025can} to its fourth problem.

\subsection{Background}
Let $f(x) = \prod_{i=1}^n (x - \alpha_i)$ be a monic degree-$n$ polynomial with simple real roots
$\alpha_1 < \cdots < \alpha_n$. Following the definition in~\cite{GVS26} (using $s_n$ in place of $\mathcal{J}_n$ therein), we define the \emph{score vector}

\begin{equation}\label{eq:score}
  s_n(\alpha)_i
  \;:=\;
  \sum_{j \neq i} \frac{1}{\alpha_i - \alpha_j},
  \qquad 1 \le i \le n,
\end{equation}
and the \emph{finite free Fisher information}
\begin{equation}\label{eq:Phi}
  \Phi_n(f)
  \;:=\;
  \frac{4}{n(n-1)^2}
  \sum_{i=1}^n
  \bigg(\sum_{j \neq i} \frac{1}{\alpha_i - \alpha_j}\bigg)^{\!2}
  \;=\;
  \frac{4}{n(n-1)^2}\,\|s_n(\alpha)\|_2^2.
\end{equation}
Given two monic real-rooted polynomials $f, g$ of degree $n$ with roots $\alpha, \beta$, their \emph{finite free additive convolution} is
\begin{equation}\label{eq:boxplus}
  (f \boxplus_n g)(x)
  \;:=\;
  \frac{1}{n!}\sum_{\pi \in S_n}
  \prod_{i=1}^n (x - \alpha_i - \beta_{\pi(i)}),
\end{equation}
equivalently computed via the coefficient formula~\cite{AP18,MSS22}: if $f(x) = \sum_{k=0}^n (-1)^k a_k x^{n-k}$ and $g(x) = \sum_{k=0}^n (-1)^k b_k x^{n-k}$ with $a_0 = b_0 = 1$, then
\begin{equation}\label{eq:boxplus_coeff}
  c_k
  \;=\;
  \sum_{i+j=k}
  \frac{(n-i)!\,(n-j)!}{n!\,(n-k)!}\,a_i\,b_j,
  \qquad k = 0,\ldots,n.
\end{equation}
The product $f \boxplus_n g$ is real-rooted whenever $f$ and $g$ are so~\cite{Walsh22}. The \emph{finite free Stam inequality}, proved by Garza-Vargas, Srivastava, and Stier~\cite{GVS26}, states

\begin{theorem}[Finite free Stam inequality
  {\cite[Theorem~1.4]{GVS26}}]
\label{thm:stam}
  For all monic real-rooted $f, g$ of degree $n$,
  \begin{equation}\label{eq:stam}
    \frac{1}{\Phi_n(f \boxplus_n g)}
    \;\ge\;
    \frac{1}{\Phi_n(f)} + \frac{1}{\Phi_n(g)}.
  \end{equation}
  Equality holds when $f$ and $g$ are Hermite polynomials.
\end{theorem}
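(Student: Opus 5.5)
The plan is to imitate Blachman's proof of the classical Stam inequality. There, the score of $X+Y$ is the conditional expectation of the score of $X$, and also of the score of $Y$, given $X+Y$. We then take a convex combination of the two representations and use that conditional expectation contracts $L^2$.

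Write $h=f\boxplus_n g$ with roots $\gamma_1<\dots<\gamma_n$, which are simple and real by \cite{Walsh22}. Since $\Phi_n$ carries the same normalising constant $\frac{4}{n(n-1)^2}$ for $f$, $g$ and $h$, \eqref{eq:stam} is equivalent to $\|s_n(\gamma)\|_2^{-2}\ge \|s_n(\alpha)\|_2^{-2}+\|s_n(\beta)\|_2^{-2}$.

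\textbf{Step 1 (score transport).} I would first show that there are matrices $A,B\in\R^{n\times n}$ with nonnegative entries such that
\[
s_n(\gamma)=A\,s_n(\alpha)=B\,s_n(\beta).
\]
Each row of $A$ and of $B$ should sum to $1$, and each column sum should equal $1$ as well. The candidate weights come from $s_n(\gamma)_k=h''(\gamma_k)/(2h'(\gamma_k))$ together with the permutation-average formula \eqref{eq:boxplus}. Differentiating $\prod_i(x-\alpha_i-\beta_{\pi(i)})$ twice and evaluating at $\gamma_k$ expresses $h''(\gamma_k)$ as a weighted sum of terms $1/(\gamma_k-\alpha_i-\beta_{\pi(i)})$. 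One then regroups these terms through the partial-fraction expansions of $f''/f'$ and $f'/f$, and uses the derivative rule $(f\boxplus_n g)'=\tfrac1n\,f'\boxplus_{n-1} g$ to turn the regrouped sum into a combination of the $s_n(\alpha)_i$. The weights $a_{ki}$ should be ratios of the form $\frac{\text{(something built from } f'\boxplus_{n-1}g \text{ near } \gamma_k)}{h'(\gamma_k)}$.

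I expect this step to be the \textbf{main obstacle}. Column sums are the easy part: they follow from $\sum_k s_n(\gamma)_k=\sum_i s_n(\alpha)_i=0$ together with the symmetry of the construction. Nonnegativity of the entries is the hard part. I would try to derive it from interlacing: the roots of $f'\boxplus_{n-1}g$ interlace those of $h$, and real-rootedness is preserved under $\boxplus$. Together these should fix the signs of the relevant residues. If a direct construction resists, my fallback is a finite free heat-flow (de Bruijn) argument. Along the Hermite flow $f\boxplus_n H_t$, I would compute $\frac{d}{dt}\Phi_n^{-1}$ and compare it with the same derivative along $g\boxplus_n H_t$.

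\textbf{Step 2 (Blachman convexity).} Assume Step 1. For any $\lambda\in[0,1]$,
\[
s_n(\gamma)=\lambda A s_n(\alpha)+(1-\lambda)B s_n(\beta).
\]
A doubly stochastic matrix has operator norm at most $1$ on $\ell^2$; this follows from Birkhoff's theorem, or from Jensen's inequality applied row by row with Schur's test. Together with the triangle inequality in squared form, this should give
\[
\|s_n(\gamma)\|^2\le \lambda^2\|s_n(\alpha)\|^2+(1-\lambda)^2\|s_n(\beta)\|^2.
\]
To justify dropping the cross term, I would apply the Cauchy--Schwarz inequality on the joint coupling, as in Blachman's argument. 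This means realising $A$ and $B$ as marginals of a single nonnegative coupling tensor, which the permutation average in \eqref{eq:boxplus} should supply. Choosing $\lambda=\|s_n(\beta)\|^2/(\|s_n(\alpha)\|^2+\|s_n(\beta)\|^2)$ then yields exactly $\|s_n(\gamma)\|^{-2}\ge\|s_n(\alpha)\|^{-2}+\|s_n(\beta)\|^{-2}$, which is \eqref{eq:stam}.

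\textbf{Step 3 (equality).} For the Hermite polynomial $H_{n,\sigma}$ with variance parameter $\sigma^2$, the Stieltjes relations give $s_n(\alpha)_i=c_n\alpha_i/\sigma^2$. It follows that $\Phi_n(H_{n,\sigma})\propto 1/\sigma^2$. Using the cumulant additivity encoded in \eqref{eq:boxplus_coeff}, one has $H_{n,\sigma}\boxplus_n H_{n,\tau}=H_{n,\sqrt{\sigma^2+\tau^2}}$, so $1/\Phi_n$ is additive on Hermite pairs and equality holds there. This is consistent with Step 2: linear scores make the Cauchy--Schwarz step tight.
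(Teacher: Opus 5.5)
\textbf{The gap is in Step~2, and it is the heart of the matter.} From $s_n(\gamma)=\lambda A s_n(\alpha)+(1-\lambda)B s_n(\beta)$ and $\|A\|,\|B\|\le 1$, the triangle inequality only gives $\|s_n(\gamma)\|\le\lambda\|s_n(\alpha)\|+(1-\lambda)\|s_n(\beta)\|$. This keeps the cross term $2\lambda(1-\lambda)\|s_n(\alpha)\|\,\|s_n(\beta)\|$, and optimizing over $\lambda$ yields only $\Phi_n(f\boxplus_n g)\le\min\{\Phi_n(f),\Phi_n(g)\}$.

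No argument using only the properties you list in Step~1 can do better. Take $A=B=I$: it is nonnegative and doubly stochastic, and it is compatible with $s_n(\gamma)=s_n(\alpha)=s_n(\beta)$. In that case \eqref{eq:stam} would read $\|s\|^{-2}\ge 2\|s\|^{-2}$, which is false. What is actually needed is a \emph{joint} contraction,
\[
\|Au+Bv\|_2^2\le\|u\|_2^2+\|v\|_2^2\qquad(u,v\in\one^\perp).
\]
This says the full Jacobian $[\,A\;B\,]$ is a contraction on the mean-zero subspace $V$. It fails on all of $\R^{2n}$, since $\one\oplus\one\mapsto 2\one$.

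In Blachman's proof the cross term is not removed by Cauchy--Schwarz. It vanishes by independence, $\mathbb{E}[\rho_X(X)\rho_Y(Y)]=\mathbb{E}[\rho_X(X)]\,\mathbb{E}[\rho_Y(Y)]=0$, inside a conditional expectation on a product space. Your finite analogue would be a tensor $P_{kij}\ge 0$ whose $(k,i)$- and $(k,j)$-marginals are $A/n$ and $B/n$ and whose $(i,j)$-marginal is uniform. But \eqref{eq:boxplus} averages \emph{polynomials}, not root configurations, so it supplies no such coupling. Moreover, such a coupling would already imply the joint contraction by conditional Jensen, so asserting it assumes the crux.

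\textbf{The missing idea is the second-order, hyperbolic-polynomial input that the paper attributes to \cite{GVS26}.} Take $A=J_\alpha$ and $B=J_\beta$, the partial Jacobians of $\Omega_{\boxplus_n}$.

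Step~1 then needs no residue bookkeeping. The flow $\partial_t f=-\tfrac12 f''$, i.e.\ $f\mapsto f\boxplus_n(\sqrt t)_*\mathrm{He}_n$, moves simple roots with velocity exactly $s_n$ and commutes with $\boxplus_n$. The chain rule then gives $s_n(\gamma)=J_\alpha s_n(\alpha)=J_\beta s_n(\beta)$. So your fallback flow is the right tool for Step~1, not for the whole inequality.

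For the contraction, use that $\boxplus_n$ adds variances: on centered inputs $\|\Omega_{\boxplus_n}(x)\|^2=\|x\|^2$. Differentiating twice along $w\in V$ gives
\[
\|w\|^2-\|J_{\boxplus_n}w\|^2=\sum_k\Omega_{\boxplus_n,k}\,\nabla^2\Omega_{\boxplus_n,k}(w,w).
\]
By Abel summation over the ordered roots, the right side is a nonnegative combination of Hessians of the sums of the $k$ largest roots; the sum of all roots is linear, so its Hessian drops out. These partial sums are convex because $\frac{1}{n!}\sum_\pi\prod_i(x-\alpha_i-\beta_{\pi(i)})$ is hyperbolic in $(x,\alpha,\beta)$ (G\aa{}rding/BGLS \cite{BGLS01}). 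Setting $u=\lambda s_n(\alpha)$ and $v=(1-\lambda)s_n(\beta)$ with your choice of $\lambda$ then finishes the proof.

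Nonnegativity of the entries of $A$, which you flagged as the main obstacle, is not what the inequality hinges on. Your column-sum argument is also invalid as stated: the single identity $\one^\top A s_n(\alpha)=0$ does not determine $\one^\top A$. The correct reason is that $\sum_k\gamma_k=\sum_i\alpha_i+\sum_j\beta_j$. Step~3 is correct.
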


The proof relies on (i)~a Jacobian identity relating score vectors through the root map
$\Omega_{\boxplus_n}: (\alpha,\beta) \mapsto \gamma$, and (ii)~a positivity result for Hessians of root maps from the convexity theory of hyperbolic
polynomials~\cite{BGLS01}. Crucially, step~(ii) is specific to the $\ell^2$ structure of $\Phi_n$, namely the Hessian positivity gives
$\|w\|_2^2 - \|J_{\boxplus_n}(w)\|_2^2 \ge 0$, a statement about \emph{quadratic forms} that does not readily extend to $\ell^p$ norms for $p \neq 2$.

While~\cite{GVS26} establishes \Cref{thm:stam} and identifies the Hermite equality case, the proof does not address: (a)~whether Hermite is the \emph{unique} equality
case (up to affine transformations), (b)~any \emph{quantitative stability} estimate controlling the deficit by distance from the extremal set, or (c)~the
detailed \emph{spectral structure} of the Jacobian contraction at the Hermite point that governs both stability and CLT convergence rates. These questions motivate the computational investigation presented here.

\subsection{Contributions}

This paper presents four contributions, all discovered or evidenced through FlowBoost and AHT.

\begin{enumerate}[label=\textbf{C\arabic*.},leftmargin=2em]

\item \textbf{Independent discovery of the Hermite
  extremizer (\Cref{sec:hermite}).} Using FlowBoost to minimize the scale-free deficit $\rho$ at $p = 2$, we independently discover that equality configurations converge to the Hermite root configuration with $f \approx g$. This computational discovery predates our reading of the proof in~\cite{GVS26}.

\item \textbf{Geometric spectrum of the convolution coupling
  (\Cref{sec:spectrum}).}
  Guided by the discovered extremizer, we compute the doubly
  stochastic coupling matrix $E_n$ at the Hermite point and
  discover that its singular values on the mean-zero subspace
  numerically follow $\{2^{-k/2} : k = 1, \ldots, n-1\}$, independent of $n$, supported by a high-precision restricted audit through $n \le 90$. Conditional on this conjecture, one obtains the sharp local stability constant ($1/2$, uniform in $n$) and the finite free CLT convergence rate ($1/\!\sqrt{2}$, uniform in $n$).

\item \textbf{Sharp phase transition for the $p$-Stam inequality (\Cref{sec:pstam}).} We introduce the one-parameter family of \emph{$p$-Stam inequalities} with $\ell^p$-Fisher information and discover a sharp phase transition at $p^* = 2$ where the inequality holds for $p \in (1, 2]$ and fails for $p > 2$, with counterexamples over all tested $(n, p)$. Then, we prove that the Hermite pair itself violates the inequality for $p > 2$ (\Cref{prop:hermite_counter}), connecting the phase transition mechanism to the spectrum of~$E_n$.

\item \textbf{Extremizer bifurcation for $p < 2$
  (\Cref{sec:extremizers_p}).} Combining FlowBoost's global search with AHT, for $p \lesssim 1.8$, the extremal points become \emph{non-matching} pairs with \emph{bimodal} root structure, lying far from Hermite. No local analysis starting at the Hermite point can detect these configurations. As $p \to 2^-$, the bimodal structure merges and the extremizers converge to the Hermite pair (\Cref{conj:extremizer_transition}). The e-value-based AHT screen strongly rejects the Hermite family in favor of a two-block structure throughout the subcritical regime, while the precise closed-form description of the subcritical extremizer family remains open.

\end{enumerate}

\begin{remark}[Relation to the classical setting]
    The classical Stam inequality goes back to Blachman~\cite{Blachman65}. For R\'enyi entropy power, Bobkov and Chistyakov proved a constant-factor entropy power inequality for all orders $\alpha>1$~\cite{BobkovChistyakov15}, while Bobkov and Marsiglietti proved additive exponentiated variants of the form $N_r^\beta(X+Y)\ge N_r^\beta(X)+N_r^\beta(Y)$ for $\beta \ge (r+1)/2$~\cite{BobkovMarsiglietti17}. In free probability, the corresponding information inequalities originate in Voiculescu's free Fisher information~\cite{Voiculescu93}, the free entropy power inequality of Szarek and Voiculescu~\cite{SzarekVoiculescu96}, and Shlyakhtenko's free Shannon analogue~\cite{Shlyakhtenko07}. Our \Cref{conj:phase} asserts that the finite free analogue has the same critical exponent $p^* = 2$, suggesting that this threshold reflects a deep structural boundary.
\end{remark}

Sections~\ref{sec:hermite}–\ref{sec:extremizers_p} present the four contributions in the order they were discovered, preceded by setup (\Cref{sec:setup}) and a description of the FlowBoost and AHT pipelines (\Cref{sec:flowboost}).

\section{Setup and Definitions}\label{sec:setup}

\subsection{The \texorpdfstring{$p$}{p}-Fisher information and the \texorpdfstring{$p$}{p}-Stam inequality}

For $p > 1$ and a monic real-rooted polynomial $f$ of degree $n$ with roots $\alpha$, define the \emph{$p$-Fisher information}
\begin{equation}\label{eq:Phi_p}
  \Phi_{n,p}(f)
  \;:=\;
  \|s_n(\alpha)\|_p^p
  \;=\;
  \sum_{i=1}^n
  \bigg|\sum_{j \neq i}
  \frac{1}{\alpha_i - \alpha_j}\bigg|^p.
\end{equation}
For $p = 2$ this reduces to $\|s_n(\alpha)\|_2^2 = \frac{n(n-1)^2}{4}\,\Phi_n(f)$.
Since $\Phi_n(f)$ and $\Phi_{n,2}(f)$ differ only by the positive constant
$\frac{4}{n(n-1)^2}$, the $p$-Stam deficit $g_p$ and its sign are unaffected by
which normalization is used. All theorems and tables in this paper involving
$g_p$ use the unnormalized $\Phi_{n,p}$ of~\eqref{eq:Phi_p}.

The \emph{$p$-Stam inequality} asserts, for all monic real-rooted $f, g$ of degree $n$:
\begin{equation}\label{eq:pstam}
  \frac{1}{\Phi_{n,p}(f \boxplus_n g)^{1/(p-1)}}
  \;\ge\;
  \frac{1}{\Phi_{n,p}(f)^{1/(p-1)}}
  +
  \frac{1}{\Phi_{n,p}(g)^{1/(p-1)}}.
\end{equation}

The exponent $1/(p-1)$ is dictated by affine dilation. Under $f(x) \mapsto f_c(x) := c^n f(x/c)$ and $g(x) \mapsto g_c(x) := c^n g(x/c)$, the roots scale by $(\alpha,\beta,\gamma) \mapsto (c\alpha,c\beta,c\gamma)$,
the score vector scales by $s \mapsto c^{-1}s$, and hence
\[
  \Phi_{n,p}(f_c)=c^{-p}\Phi_{n,p}(f),\qquad
  \Phi_{n,p}(g_c)=c^{-p}\Phi_{n,p}(g).
\]
Therefore
$\Phi_{n,p}^{-1/(p-1)}$ is homogeneous of degree $p/(p-1)$, and $1/(p-1)$ is an exponent for which
both sides of~\eqref{eq:pstam} scale with the same degree under a common dilation of the pair. For $p = 2$, the
exponent is $1$ and~\eqref{eq:pstam} reduces to~\eqref{eq:stam}.

\begin{definition}[$p$-Stam deficit]\label{def:gp}
  For monic real-rooted $f, g$ of degree $n$ and $p > 1$, define the \emph{additive deficit}
  \begin{equation}\label{eq:gp}
    g_p(f,g)
    \;:=\;
    \frac{1}{\Phi_{n,p}(f \boxplus_n g)^{1/(p-1)}}
    -
    \frac{1}{\Phi_{n,p}(f)^{1/(p-1)}}
    -
    \frac{1}{\Phi_{n,p}(g)^{1/(p-1)}}.
  \end{equation}
  The $p$-Stam inequality~\eqref{eq:pstam} holds if and only if $g_p(f,g) \ge 0$ for all $(f, g)$.
\end{definition}

\begin{definition}[Scale-free deficit]\label{def:rho_p}
  The \emph{baseline denominator} and
  \emph{scale-free $p$-deficit} are
  \begin{equation}\label{eq:rho_p}
    A_p(f,g) :=
    \frac{1}{\Phi_{n,p}(f)^{1/(p-1)}}
    + \frac{1}{\Phi_{n,p}(g)^{1/(p-1)}},
    \qquad
    \rho_p(f,g) := \frac{g_p(f,g)}{A_p(f,g)}.
  \end{equation}
  The $p$-Stam inequality holds if and only if
  $\rho_p(f,g) \ge 0$ for all $(f,g)$.
  For $p = 2$, we write $\rho := \rho_2$.
\end{definition}

\subsection{Distance diagnostics and normalization}

Let $h^{(n)} \in \R^n$ denote the sorted roots of the degree-$n$ probabilists' Hermite polynomial $\mathrm{He}_n$, normalized to have mean $0$ and variance $1$. To compare root \emph{shapes} across different samples, we normalize each root vector $r \in \R^n$ by
\begin{equation}\label{eq:normalize}
  r
  \;\mapsto\;
  \widetilde r
  :=
  \frac{r - \bar r \mathbf{1}}
       {\sqrt{\frac{1}{n}\sum_{i=1}^n (r_i-\bar r)^2}},
  \qquad
  \bar r := \frac{1}{n}\sum_{i=1}^n r_i .
\end{equation}
This normalization is used only for geometric diagnostics and visualization. It defines a shape-based comparison between root configurations, but it is not a symmetry reduction of the underlying variational problem, that is the finite free Stam objective is invariant under a \emph{common} affine transformation applied to both polynomials, not under independent rescalings.

For a normalized root vector $r \in \R^n$, we define the Hermite distance
\begin{equation}\label{eq:dH}
  d_H(r)
  :=
  \frac{1}{\sqrt n}
  \min\Bigl\{
    \|r-h^{(n)}\|_2,\,
    \|r-\mathrm{rev}(h^{(n)})\|_2
  \Bigr\},
\end{equation}
where $\mathrm{rev}$ denotes reversal. For a pair $(\alpha,\beta)$ of normalized root vectors, we also define
\begin{equation}\label{eq:D_dPQ}
  D(\alpha,\beta) := d_H(\alpha)^2 + d_H(\beta)^2,
  \qquad
  d_{PQ}(\alpha,\beta) := \frac{1}{\sqrt n}\|\alpha-\beta\|_2.
\end{equation}

\subsection{Computational Pipeline: FlowBoost}
\label{sec:flowboost}
FlowBoost~\cite{FlowBoost25} is a closed-loop generative optimization framework for constrained continuous search in high-dimensional spaces. In FlowBoost, one trains a stochastic policy to \emph{sample} from a distribution proportional to a reward function $R(x)$, while trying to find its maximum. The search over the state space is shaped by a balance condition between the probability mass entering and leaving each region, so that the trained generator explores the full reward landscape and naturally covers multiple high-reward modes. The ``flow'' is a learned ODE that transports a tractable prior (Gaussian) to a terminal distribution concentrated on high-reward configurations, trained via flow matching and reward-guided fine-tuning so that the generator progressively focuses on the extremal region.

In the present problem, a state is a pair of degree-$n$ root configurations
\[
x = (\alpha, \beta) \in \R^n \times \R^n,
\]
and the reward is $R(x) = -g_p(\alpha,\beta)$ (we seek the pair that makes the deficit as small, or as negative, as possible).
Feasibility consists of simple real roots, a prescribed minimum separation, and the normalization convention used in the corresponding experiment.
A running top-$K$ buffer retains the highest-reward feasible configurations found so far; this buffer plays the role of the replay memory in a generative flow network and provides the training signal for successive refinement rounds.
The generator does not collapse to a single maximiser because the flow is trained to cover the reward landscape proportionally, so that diverse extremal configurations are retained alongside the best ones.

Each outer loop has four stages:

\begin{enumerate}[nosep,label=\arabic*.]
  \item \textbf{Stochastic random perturbation (SRP):}
    starting from prior samples, run a projected
    derivative-free local search with several random
    restarts. Each restart alternates Gaussian
    perturbations, symmetry moves, and
    projection back to the feasible set, namely it is followed by
    a short random-direction refinement with step-size
    reduction. In later closed-loop rounds, the same
    procedure is initialized from the saved buffer of best
    samples produced in the previous round.
  \item \textbf{Supervised Conditional Flow Matching:}
    use the top fraction of valid SRP samples,
    and train a Transformer-based conditional flow-matching~\cite{Vaswani17,lipman2022flow}
    model to transport a prior distribution toward
    this extremal region.
  \item \textbf{Reward-guided fine-tuning:}
    guide the model toward lower-deficit configurations by fine-tuning it with the oracle score (evaluated deficit) as reward and the self-distillation consistency regularization term.
  \item \textbf{Generative sampling and final push:}
    draw new candidates by ODE integration of the learned
    flow from the prior, then apply the same projected
    local search as a final refinement step. The best valid
    configurations are retained in the running top-$K$
    buffer.
\end{enumerate}

For a candidate pair $(\alpha, \beta)$, the oracle first
computes the monic coefficient vectors of the corresponding
polynomials, forms the finite free convolution coefficients
via~\eqref{eq:boxplus_coeff}, and extracts the roots
$\gamma$ of $f \boxplus_n g$. The objective is then evaluated from the corresponding Fisher-information functional. For the $p$-Stam, we use the additive deficit $g_p(f,g) = F_p(f \boxplus_n g) - F_p(f) - F_p(g)$ with $F_p(f) = \Phi_{n,p}(f)^{-1/(p-1)}$. We use the normalization
\[
  \widetilde{\Phi}_{n,p}(f)
  :=
  \frac1n\sum_{i=1}^n \left|\frac{2}{n-1}s_i(f)\right|^p
  =
  \frac{(2/(n-1))^p}{n}\,\Phi_{n,p}(f),
\]
so
$\widetilde{F}_p = c_{n,p}^{-1/(p-1)} F_p$ and
$\widetilde{g}_p = c_{n,p}^{-1/(p-1)} g_p$
for a positive constant $c_{n,p}$ depending only on
$(n,p)$.
The sign of the deficit is therefore the same for $g_p$ and
$\widetilde{g}_p$, and all numerical values reported in tables
and figures throughout this paper are given in terms of the
unnormalized deficit $g_p$ defined in~\eqref{eq:gp}.
Our dedicated $p=2$ recovery experiments minimize $\rho$
for $n \in \{6, 8, 10, 12, 20, 50\}$.
The $p$-Stam sweep uses the same closed-loop pipeline for
$n \in \{6, 8, 10, 12, 20\}$ and $p \in [1.05, 6.0],$
with objective $g_p$. For each tested pair $(n,p)$ with
$p<2$, we perform the structural post-analysis described in
\Cref{sec:extremizers_p}.

\subsection{Computational Pipeline: Abductive Hypothesis Testing (AHT)}
\label{sec:families}

To interpret and characterize the extremal roots discovered by FlowBoost, we use the Abductive Hypothesis Testing (AHT) framework of~\cite{hashemi2025can}. The defining principle of AHT is \emph{inference to the best explanation} where rather than deducing consequences from known axioms, one infers the simplest structural hypothesis consistent with observations produced by an AI system. In its original formulation, AHT is part of a broader interpretability machinery, combining abductive reasoning, representation probing, and causal inference, used to extract human-readable mathematical conjectures from trained neural networks. 

In our setting we adapt AHT to a generative optimization loop, with FlowBoost (\Cref{sec:flowboost}) as the discovery engine, and we augment it as an \emph{E-value-based AHT} in the betting sense of Shafer~\cite{Shafer21}. For each tested pair $(n,p)$, let $\mathcal{E}_{n,p} = \{x^{(1)},\ldots,x^{(M_{n,p})}\}$
denote the retained elite set of extremizers from FlowBoost, where each $x^{(m)}=(\alpha^{(m)},\beta^{(m)})$ is a pair of root configurations. We then compute a summary map $T(\mathcal{E}_{n,p})$ consisting of low-dimensional structural statistics extracted from the elite population. For each candidate family $\mathcal{H}$ in a fixed hypothesis library, AHT uses these summaries to formulate the working explanatory null hypothesis
\[H_0(\mathcal{H};n,p):
  \text{``Best $\mathrm{Top}_k$ Samples~}\mathcal{E}_{n,p}
  \text{ is explained by }\mathcal{H}\text{''},
  \]
against the alternative that the family-fit residuals and associated descriptors are too large or too unstable across the best population. Because the data are optimizer-selected elites rather than i.i.d. observations, classical $p$-values and asymptotic significance levels are not appropriate. We therefore use a conditional e-value~\cite{VovkWang21}, together with the population-level fit summaries, to decide whether a candidate explanation survives comparison with the Hermite baseline and remains stable across neighboring values of $(n,p)$. The AHT loop thus has four stages:

\begin{enumerate}[nosep,label=(\roman*)]
  \item \textbf{Generate.} FlowBoost returns elite extremal sets for each
    tested $(n,p)$.
  \item \textbf{Describe.} The summary statistics $T(\mathcal{E}_{n,p})$ is extracted from each elite population, including family-fit residuals, pair mismatch, Hermite distance, and symmetry frequencies.
  \item \textbf{Screen by e-value.} Candidate model classes $\mathcal{H}$ are compared against the Hermite baseline through one-sided conditional e-values and through the stability of their fit summaries across neighboring $(n,p)$ values.
  \item \textbf{Reject or promote.} A candidate family is promoted to a structural conjecture only if this combined fit-and-evidence screen remains favorable across the tested grid, otherwise it is rejected in favor of the alternative.
\end{enumerate}

The e-value screen is defined as follows. For fixed $(n,p)$ and a candidate family $\mathcal{H}$, let $R_{\mathcal{H}}(x^{(m)}) = d^{\mathcal{H}}_{\mathrm{joint}}(x^{(m)})$ be the Mode~A joint residual and define the win indicator
\[
  Z_m^{\mathcal{H}}
  :=
  \mathbf{1}\!\left\{
    R_{\mathcal{H}}(x^{(m)}) < R_{\mathrm{He}}(x^{(m)})
  \right\},
\]
after discarding ties. Under the \emph{working} model that
the retained point cloud is an exchangeable sample from a fixed
law, we test
\[
  H_{0}^{\mathrm{He}}(\mathcal{H};n,p):
  \mathbb{P}(Z_m^{\mathcal{H}}=1)\le \tfrac12
\]
against the one-sided alternative
$\mathbb{P}(Z_m^{\mathcal{H}}=1)>\tfrac12$ using the
Bernoulli-mixture e-value, in the same mixture
likelihood-ratio spirit that underlies time-uniform
Bernoulli testing and confidence-sequence constructions
\cite{HowardRamdasMcAuliffeSekhon21},
\begin{equation}\label{eq:evalue_screen}
  E_{\mathcal{H}}
  =
  \int_{1/2}^{1}
  \prod_{m=1}^{M_{\mathrm{eff}}}
  \frac{q^{Z_m^{\mathcal{H}}}(1-q)^{1-Z_m^{\mathcal{H}}}}
       {(1/2)^{Z_m^{\mathcal{H}}}(1/2)^{1-Z_m^{\mathcal{H}}}}
  \,\pi(dq),
  \qquad
  \pi = \mathrm{Unif}[1/2,1],
\end{equation}
where $M_{\mathrm{eff}}$ is the number of non-tied
comparisons. Under this working model, $E_{\mathcal{H}}\ge 20$ and $E_{\mathcal{H}}\ge 100$
correspond to nominal levels $0.05$ and $0.01$ by Markov's inequality;
because the elite point cloud is optimizer-selected rather than exchangeable,
these thresholds are diagnostic benchmarks rather than calibrated significance levels. The numerical results are reported in \Cref{app:evalues}.

\begin{figure}[htb!]
\centering
\begin{tikzpicture}[
    >=Stealth,
    node distance=0.8cm and 0.8cm,
    ahtstep/.style={
      rectangle,
      rounded corners=3pt,
      draw=black!65,
      fill=black!4,
      very thick,
      minimum width=2.9cm,
      minimum height=1.0cm,
      align=center,
      font=\small
    },
    ahtnote/.style={
      rectangle,
      rounded corners=3pt,
      draw=black!20,
      fill=black!2,
      minimum width=2.9cm,
      minimum height=0.82cm,
      align=center,
      font=\scriptsize
    },
    ahtflow/.style={->, thick, draw=black!75}
]

\node[ahtstep] (gen) {FlowBoost Sampling};
\node[ahtstep, right=of gen] (desc) {Describe};
\node[ahtstep, right=of desc] (hyp) {Screen by $E$};
\node[ahtstep, right=of hyp] (sel) {Reject / Promote};

\node[ahtnote, below=0.55cm of gen] (gennote) {Top-k elite populations\\$\mathcal{E}_{n,p}$};
\node[ahtnote, below=0.55cm of desc] (descnote) {Summary statistics\\$T(\mathcal{E}_{n,p})$};
\node[ahtnote, below=0.55cm of hyp] (hypnote) {Null hypothesis and e-values\\$H_0^{\mathrm{He}}(\mathcal{H};n,p),\,E_{\mathcal H}$};
\node[ahtnote, below=0.55cm of sel] (selnote) {Reject $H_0$ or promote\\an alternative};

\draw[ahtflow] (gen) -- (desc);
\draw[ahtflow] (desc) -- (hyp);
\draw[ahtflow] (hyp) -- (sel);

\end{tikzpicture}
\caption{E-value-based AHT as used in this study. FlowBoost first produces elite extremal samples $\mathcal{E}_{n,p}$. These are compressed into summary statistics $T(\mathcal{E}_{n,p})$, compared against a library of low-complexity candidate families $\mathcal{H}$, and screened against the Hermite baseline through the conditional e-values of \eqref{eq:evalue_screen}. A candidate family is promoted only when its population-level fit remains stable across the tested grid and its e-value evidence is persistently favorable.}
\label{fig:aht_pipeline}
\end{figure}
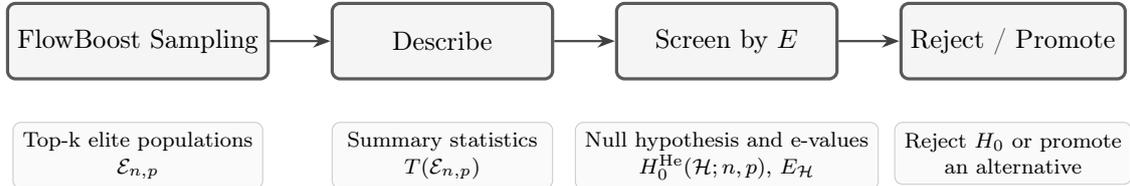

This abductive workflow is summarized in \Cref{fig:aht_pipeline}. The library of candidate structural hypotheses tested at stage~(iii) consists of: Hermite ($\mathrm{He}_n$), semicircle quantiles, uniform spacing, Jacobi families ($J_n^{(a,b)}$ for several $(a,b)$), and \emph{two-block uniform} configurations, where the roots split into two equally spaced clusters of sizes $\lfloor n/2 \rfloor$ and $\lceil n/2 \rceil$ separated by a gap. Fitting is performed in the per-polynomial gauge via affine residual minimization:
\begin{equation}\label{eq:dfamily}
  d_{\mathrm{family}}(r)
  \;:=\;
  \min_{a > 0}
  \frac{1}{\sqrt{n}}
  \|r - a\,r_{\mathrm{ref}}\|_2,
\end{equation}
and the \emph{joint residual} for a pair is
$d_{\mathrm{joint}} = \sqrt{d_{\mathrm{family}}(\alpha)^2
+ d_{\mathrm{family}}(\beta)^2}$.

\section{The \texorpdfstring{$p = 2$}{p = 2} Case: Hermite Extremizer Recovery}
\label{sec:hermite}

Before the proof in~\cite{GVS26} became available,
we applied FlowBoost at $p = 2$ with the sole objective of
minimizing the scale-free deficit~$\rho$, for $n \in \{6,
8, 10, 12, 20, 50\}$. FlowBoost samples converged to the Hermite root configuration with $f \approx g$, with no prior knowledge that this was the equality case.

\begin{table}[htb!]
\centering
\caption{FlowBoost results at $p = 2$.
$\rho_{\min}$ is the best scale-free deficit;
$D_{\min}$ and $d_{PQ,\min}$ are the Hermite distance
and mismatch at the best sample.}
\label{tab:p2_summary}
\begin{tabular}{@{}r l l l l@{}}
\toprule
$n$ & $\rho_{\min}$ & $D_{\min}$
    & $d_{PQ,\min}$ & Spearman$(\rho, D)$ \\
\midrule
6  & $2.1 \times 10^{-9}$  & $5.3 \times 10^{-10}$
   & $1.2 \times 10^{-5}$  & 0.89 \\
8  & $2.2 \times 10^{-8}$  & $3.2 \times 10^{-9}$
   & $6.6 \times 10^{-5}$  & 0.87 \\
10 & $3.6 \times 10^{-7}$  & $4.0 \times 10^{-8}$
   & $1.0 \times 10^{-5}$  & 0.85 \\
12 & $8.2 \times 10^{-7}$  & $1.1 \times 10^{-8}$
   & $4.6 \times 10^{-4}$  & 0.80 \\
20 & $2.0 \times 10^{-7}$  & $2.4 \times 10^{-8}$
   & $3.6 \times 10^{-4}$  & 0.80 \\
50 & $8.4 \times 10^{-6}$  & $5.0 \times 10^{-7}$
   & $1.0 \times 10^{-4}$  & 0.79 \\
\bottomrule
\end{tabular}
\end{table}

As $\rho \to 0$, both $D \to 0$ and $d_{PQ} \to 0$ simultaneously, meaning that the equality forces both root vectors toward the Hermite configuration \emph{and} toward each other, with no exception across all tested degrees.

\begin{table}[htb!]
\centering
\caption{Best sample at $n = 6$: root vectors
$\alpha, \beta$ vs.\ normalized Hermite roots
$h^{(6)}$, displayed to four decimal places.
The apparent discrepancy at $i=6$ ($1.4867$ vs.\ $1.4866$) is
a rounding artifact; the true residual $D_{\min}=5.3\times10^{-10}$
(see \Cref{tab:p2_summary}) is consistent with agreement to nine significant figures.}
\label{tab:top_samples}
\begin{tabular}{@{}c r@{\;\;}r@{\;\;}r@{\;\;}r@{\;\;}r@{\;\;}r@{}}
\toprule
& $i = 1$ & $i = 2$ & $i = 3$ & $i = 4$ & $i = 5$ & $i = 6$ \\
\midrule
$h^{(6)}_i$ & $-1.4866$ & $-0.8449$ & $-0.2758$
            & $\phantom{-}0.2758$ & $\phantom{-}0.8449$ & $\phantom{-}1.4866$ \\
$\alpha_i$  & $-1.4866$ & $-0.8449$ & $-0.2758$
            & $\phantom{-}0.2758$ & $\phantom{-}0.8449$ & $\phantom{-}1.4867$ \\
$\beta_i$   & $-1.4866$ & $-0.8449$ & $-0.2758$
            & $\phantom{-}0.2758$ & $\phantom{-}0.8449$ & $\phantom{-}1.4867$ \\
\bottomrule
\end{tabular}
\end{table}

\begin{observation}[Hermite extremizer at $p = 2$]
\label{obs:hermite}
   In the unnormalized setting, the pair
  $(f,g)=(\mathrm{He}_n,\mathrm{He}_n)$ is an equality case
  of the finite free Stam inequality, and more generally
  every matching affine image of this pair also satisfies
  $g_2(f,g)=0$~\cite[Theorem~1.4]{GVS26}. In the
  per-polynomial gauge~\eqref{eq:normalize}, this yields
  $\rho(\alpha,\beta)=0$ whenever
  $\alpha=\beta=h^{(n)}$ up to reversal symmetry.
\end{observation}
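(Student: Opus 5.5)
The plan is to verify the equality directly, using two facts: finite free convolution of Hermite polynomials is again a rescaled Hermite polynomial, and the Hermite score vector is proportional to the root vector. The first step is the semigroup property. Let $H_t(x)$ be the monic Hermite polynomial of variance $t$, i.e.\ $t^{n/2}\mathrm{He}_n(x/\sqrt t)$. Then $H_s \boxplus_n H_t = H_{s+t}$. One can check this from \eqref{eq:boxplus_coeff}: equivalently, $H_t = e^{-\frac{t}{2}\partial^2}x^n$, and $\boxplus_n$ turns the differential-operator representation $f = P(\partial)x^n$ into multiplication of the symbols $P$. Consequently $\mathrm{He}_n\boxplus_n \mathrm{He}_n$ has roots $\gamma=\sqrt2\,h$, where $h$ denotes the roots of $\mathrm{He}_n$.

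The second step computes the score at Hermite roots. The classical Stieltjes relation states that the roots of $\mathrm{He}_n$ satisfy $\sum_{j\ne i}\frac{1}{h_i-h_j}=\frac{h_i}{2}$. This follows from $\mathrm{He}_n''-x\mathrm{He}_n'+n\mathrm{He}_n=0$ evaluated at a root, together with $f''(h_i)/f'(h_i)=2\sum_{j\ne i}(h_i-h_j)^{-1}$. Hence $s_n(h)=h/2$ and
\[
\|s_n(h)\|_2^2=\tfrac14\sum_i h_i^2=\tfrac{n(n-1)}{4}.
\]
By the scaling law $s\mapsto c^{-1}s$, we get $\Phi_{n,2}(\sqrt2\,\cdot)=\tfrac12\Phi_{n,2}$. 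Therefore
\[
\frac{1}{\Phi_{n,2}(f\boxplus_n g)}=\frac{2}{\Phi_{n,2}(\mathrm{He}_n)}=\frac{1}{\Phi_{n,2}(f)}+\frac{1}{\Phi_{n,2}(g)},
\]
which gives $g_2=0$.

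The third step handles matching affine images. A common translation $x\mapsto x+a$ applied to both polynomials shifts $\gamma$ by $2a$ and leaves every score unchanged. A common dilation by $c$ multiplies all three terms of $g_2$ by $c^{2}$, by the homogeneity computed in \Cref{sec:setup}. Reflection $x\mapsto -x$ reverses the roots and negates the scores. In every case $g_2$ stays zero.

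Finally, in the per-polynomial gauge \eqref{eq:normalize}, $\alpha=\beta=h^{(n)}$ (or its reversal, which equals $h^{(n)}$ by symmetry of Hermite roots) is just a normalized matching affine image of the Hermite pair. Since $\rho=g_2/A_2$ with $A_2>0$, we conclude $\rho=0$.

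The main obstacle is the semigroup identity, specifically checking the coefficient convolution formula against $e^{-t\partial^2/2}$. I would verify it by writing $a_{2k}=\frac{n!}{(n-2k)!}\frac{(-t/2)^k}{k!}$ (with $a$ taken with the sign convention of \eqref{eq:boxplus_coeff}) and confirming that the sum in \eqref{eq:boxplus_coeff} collapses via the binomial theorem. Alternatively, the whole statement can simply be cited from \cite[Theorem~1.4]{GVS26}, as recorded in \Cref{thm:stam}.
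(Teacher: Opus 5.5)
Your proof is correct, and it takes a different route from the paper. The paper gives no argument for this observation; it simply imports the equality case from \cite[Theorem~1.4]{GVS26}. You instead verify it directly. The symbol-multiplication description of $\boxplus_n$ is exactly \eqref{eq:boxplus_coeff}, once one writes $f=\sum_k p_k\partial^k x^n$ with $p_k=(-1)^k a_k(n-k)!/n!$. It gives $H_s\boxplus_n H_t=H_{s+t}$, so $\mathrm{He}_n\boxplus_n\mathrm{He}_n$ is the $\sqrt2$-dilation of $\mathrm{He}_n$. The $(-1)$-homogeneity of the score, together with the equivariance of \eqref{eq:boxplus} under common translations, dilations and reflection, then handles the affine orbit and the gauge claim.

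Two small remarks. First, your Stieltjes step ($s_n(h)=h/2$, $\|s_n(h)\|_2^2=n(n-1)/4$) is never actually used: scaling alone gives $\Phi_{n,2}(\mathrm{He}_n\boxplus_n\mathrm{He}_n)=\tfrac12\Phi_{n,2}(\mathrm{He}_n)$. Second, $\mathrm{rev}(h^{(n)})=-h^{(n)}$ as a vector. It agrees with $h^{(n)}$ only as a root multiset, which is all that $\rho$ depends on.

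Your route buys self-containedness and a little extra for free. The same lines give $g_2(H_s,H_t)=0$ for Hermite pairs of unequal variance. For general $p>1$ they give $g_p(\mathrm{He}_n,\mathrm{He}_n)=\Phi_{n,p}(\mathrm{He}_n)^{-1/(p-1)}\bigl(2^{p/(2(p-1))}-2\bigr)$, whose sign changes exactly at $p=2$. This recovers the sign conclusions drawn after \Cref{prop:hermite_counter} without passing through $E_n$ or \Cref{lem:score_eigen}. The citation buys brevity, at the cost of leaving that mechanism implicit.
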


\begin{remark}\label{rem:uniqueness}
  Our search strongly support the converse
  uniqueness statement, namely the
  Hermite diagonal appears to be the only equality case.
  The proof in~\cite{GVS26} proceeds via the identity
  $\|w\|^2 - \|J_{\boxplus_n}(w)\|^2
  = \sum_i \Omega_{\boxplus_n,i}\, H_{\boxplus_n,i}(w, w) \ge 0$,
  where the inequality comes from G\aa{}rding/Bauschke
  convexity of hyperbolic polynomials. We expect that a
  suitable strict-convexity refinement away from the
  Hermite orbit would convert this numerical uniqueness
  into a theorem, but we do not pursue that here.
\end{remark}

\section{Spectral Structure of the Convolution Coupling}
\label{sec:spectrum}
Having identified the Hermite pair as the extremizer of the deficit at $p=2$, it does not yet describe the local geometry of the inequality near equality. The next question is therefore: how does the deficit respond to small perturbations of the root vectors around the Hermite configuration, and how does the convolution root map propagate such perturbations? Both questions are governed by the linearization of the map at the Hermite diagonal. We therefore study the Jacobian of the convolution root map at that point.

\subsection{The coupling matrix \texorpdfstring{$E_n$}{En}}
Let $h = h^{(n)} \in \R^n$ be the normalized Hermite root
vector. The root map $\Omega_{\boxplus_n}: (\alpha,\beta) \mapsto \gamma$ is smooth at any point where $\alpha$ has simple roots~\cite[Observation~2.11]{GVS26}. Define
\begin{equation}\label{eq:En}
  E_n
  \;:=\;
  \frac{\partial\,\Omega_{\boxplus_n}(\cdot,\, h)}
       {\partial\alpha}\bigg|_{\alpha = h}
  \;\in\; \R^{n \times n}.
\end{equation}
By Lemma~3.4 of~\cite{GVS26}, $E_n$ is a doubly stochastic
matrix with nonnegative entries. At the symmetric point $(\alpha,\beta)=(h,h)$, the symmetry $\Omega_{\boxplus_n}(f,g)=\Omega_{\boxplus_n}(g,f)$ implies, by differentiating both sides at the diagonal
\begin{equation}\label{eq:sym_jacobian}
  E_n
  \;=\;
  \frac{\partial\,\Omega_{\boxplus_n}(h,\,\cdot)}
       {\partial\beta}\bigg|_{\beta = h},
\end{equation}
so, at the Hermite diagonal, the full Jacobian satisfies $J_{\boxplus_n}[u\oplus v]=E_n(u+v)$. Thus the linearization depends only on the sum of the two perturbations. In particular, anti-diagonal perturbations $u = -v$ lie in the kernel of $J_{\boxplus_n}|_V$. Let
\[
\one=(1,\dots,1)^\top,
\qquad
W:=\one^\perp=\{u\in\R^n:\ u^\top\one=0\}.
\]
Since $E_n\one=\one$ and $\one^\top E_n=\one^\top$, the subspace $W$ is invariant under $E_n$. Accordingly, $\R^n=\mathrm{span}(\one)\oplus W$ splits into the translation direction $\mathrm{span}(\one)$ and the mean-zero subspace $W$, where the nontrivial local geometry is encoded. Restricting $E_n$ to the mean-zero subspace $W=\one^\perp$, we audit that induced operator directly. A dedicated high-precision computation based on
implicit differentiation at the Hermite diagonal computes
the first ten singular values of $E_n|_W$ for
$n\in\{10,20,30,40,50,60,70,80,90\}$ using multiprecision
coefficient arithmetic in \texttt{mpmath}~\cite{mpmath};
details are deferred to \Cref{app:spectrum_audit}. The outcome is that across the full audited range, the leading singular values closely follows the dyadic targets $2^{-k/2}$, independent of $n$. That is why we propose the following conjecture,

\begin{conjecture}[Geometric spectrum of $E_n$]
\label{conj:spectrum}
  Let $n \ge 3$ and let $E_n$ be defined
  by~\eqref{eq:En}. The singular values of $E_n$ on
  $W = \one^\perp$ are
  \begin{equation}\label{eq:spectrum}
    \spec(E_n|_W)
    \;=\;
    \big\{2^{-k/2} : k = 1, \ldots, n-1\big\},
  \end{equation}
  independent of $n$, with singular value $1$ on
  $\mathrm{Span}(\one)$.
\end{conjecture}

\begin{remark}[Symmetry of $E_n$]
\label{rem:En_sym}
Numerically, $E_n$ appears to be symmetric, meaning its eigenvalues
coincide with its singular values.  The argument via
equation~\eqref{eq:sym_jacobian} establishes only that
$\partial_\alpha \Omega = \partial_\beta \Omega$ at the diagonal,
which implies that both first-order partial Jacobians agree, but does
not by itself imply that $E_n$ is a symmetric matrix.  We therefore
conjecture symmetry as a separate numerical observation: across all
audited degrees, $\|E_n - E_n^\top\|_F / \|E_n\|_F < 10^{-10}$.
\end{remark}

This means that the $k$-th singular value is $2^{-k/2}$ regardless of the degree of the polynomial. \Cref{tab:spectrum} reports selected low-mode values. The full analysis indicates a degree-independent low-mode profile with new smaller modes appended only at the tail. If \Cref{conj:spectrum} holds and $E_n$ is moreover symmetric (as conjectured in \Cref{rem:En_sym}), then its eigenvalues coincide with its singular values. In particular,
\[
  \Tr(E_n|_W)=\sum_{k=1}^{n-1}2^{-k/2}
  \longrightarrow \frac{2^{-1/2}}{1-2^{-1/2}}
  = \sqrt{2}+1
  \qquad (n\to\infty),
\]
so the trace converges to the finite limit $\sqrt2+1$ as $n\to\infty$.

\begin{table}[htb!]
\centering
\caption{Singular values of $E_n|_W$ vs.\ the conjectured
  $\sigma_k = 2^{-k/2}$, together with
  the maximum relative error among the first ten modes.}
\label{tab:spectrum}
\begin{tabular}{@{}r l l l l l l@{}}
\toprule
$n$ & $\sigma_1$ & $\sigma_2$ & $\sigma_3$
    & $\sigma_4$ & $\sigma_5$
    & $\mathrm{rel.err.}$ \\
\midrule
\textit{conj.}
    & $0.707106781$ & $0.500000000$ & $0.353553391$
    & $0.250000000$ & $0.176776695$ & --- \\
\midrule
10 & 0.707106781 & 0.500000000 & 0.353553391 & 0.250000000 & 0.176776695 & $1.33\times10^{-14}$ \\
40 & 0.707106781 & 0.500000000 & 0.353553391 & 0.250000000 & 0.176776695 & $8.02\times10^{-11}$ \\
60 & 0.707106778 & 0.500000000 & 0.353553391 & 0.249999999 & 0.176776697 & $1.15\times10^{-7}$ \\
80 & 0.707104585 & 0.499997829 & 0.353554003 & 0.249999577 & 0.176779011 & $5.77\times10^{-5}$ \\
90 & 0.707145636 & 0.500091161 & 0.353518684 & 0.250002448 & 0.176746177 & $4.56\times10^{-4}$ \\
\bottomrule
\end{tabular}
\end{table}

\begin{remark}[Analogy with the derivative case]
\label{rem:derivative_analogy} 

Proposition~4.6 of~\cite{GVS26} computes a single spectral
quantity for the differentiation root map, namely the second
singular value of $J_{\partial_x}$ equals $\sqrt{(n-2)/n}$,
which depends on the degree. \Cref{conj:spectrum} is stronger on both counts as it specifies the \emph{entire} spectrum of the convolution root map, and the values are \emph{universal constants} $2^{-k/2}$ independent of~$n$.

\end{remark}

\begin{figure}[htb!]
\centering
\includegraphics[width=0.98\textwidth]{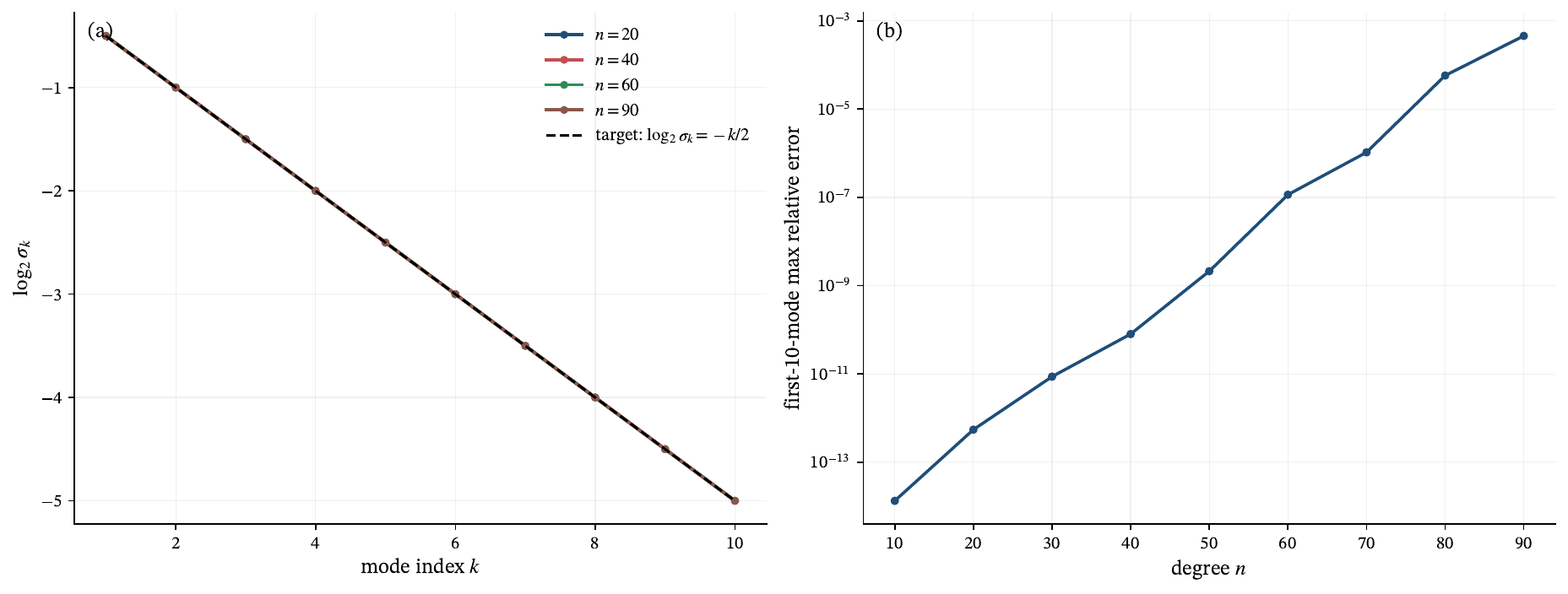}
\caption{High-precision numerical evidence for the
conjectured dyadic spectrum of the Hermite coupling
operator $E_n|_W$. Panel~(a) plots
$\log_2 \sigma_k$ for the first ten singular values at the
representative degrees $n=20,40,60,90$, together with the
reference line $\log_2 \sigma_k=-k/2$. Panel~(b) shows the
maximum relative error among the first ten singular values
across the high-precision restricted audit
$n\in\{10,20,30,40,50,60,70,80,90\}$.}
\label{fig:spectrum}
\end{figure}

\Cref{fig:spectrum} is the main numerical motivation for
\Cref{conj:spectrum}. Panel~(a) shows that the observed
dyadic law is not confined to a single degree, while
Panel~(b) quantifies that the first ten modes remain
accurate through the largest high-precision degree we
audited, namely $n=90$, with maximum relative error below
$5\times10^{-4}$. The numerical picture therefore provides a low-mode universality rather than degree-dependent drift.

\begin{corollary}[Full singular value decomposition of $J_{\boxplus_n}|_V$]
\label{cor:svd}
Assume \Cref{conj:spectrum}. Let $\{e_k\}_{k=1}^{n-1}$ be the eigenvectors of $E_n|_W$
with eigenvalues $\lambda_k = 2^{-k/2}$. On the subspace
$V = \{u \oplus v : u^\top\one = v^\top\one = 0\}
\subset \R^{2n}$:
the direction $(e_k, e_k)/\sqrt{2}$ maps to
$\sqrt{2} \cdot 2^{-k/2}\,e_k$, giving singular value
$2^{(1-k)/2}$; the direction $(e_k, -e_k)/\sqrt{2}$ maps
to~$0$. Thus
\begin{equation}\label{eq:svd_full}
  \sigma_k\big(J_{\boxplus_n}\big|_V\big)
  = 2^{(1-k)/2},
  \qquad k = 1, \ldots, n-1.
\end{equation}
In particular, $\sigma_1 = 1$
(recovering~\cite[Remark~5.4]{GVS26}), $\sigma_2 = 1/\!\sqrt{2}$,
and all subsequent singular values decay geometrically with ratio $1/\!\sqrt{2}$.
\end{corollary}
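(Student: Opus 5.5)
The plan is to reduce everything to the factorization of the Jacobian at the Hermite diagonal established just before \Cref{conj:spectrum}: by \eqref{eq:sym_jacobian}, $J_{\boxplus_n}[u\oplus v]=E_n(u+v)$. On $V$ this reads $J_{\boxplus_n}|_V = E_n|_W\circ S$, where
\[
S:V\to W,\qquad S(u\oplus v)=u+v .
\]
Since $W$ is invariant under $E_n$ (as $E_n\one=\one$ and $\one^\top E_n=\one^\top$), $J_{\boxplus_n}$ maps $V$ into $W$. All singular values can therefore be read off from the $(n-1)$-dimensional operator $E_n|_W$ together with the elementary map $S$.

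To avoid depending on the unproved symmetry of \Cref{rem:En_sym}, I will work with singular vectors rather than eigenvectors. Let $E_n|_W=\sum_k \sigma_k\, \ell_k r_k^\top$ be an SVD on $W$, where $\{r_k\}$ and $\{\ell_k\}$ are orthonormal bases of $W$ and, by \Cref{conj:spectrum}, $\sigma_k=2^{-k/2}$.

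The key steps are as follows.
\begin{enumerate}[label=(\roman*)]
\item Show that the $2(n-1)$ vectors $(r_k,r_k)/\sqrt2$ and $(r_k,-r_k)/\sqrt2$ form an orthonormal basis of $V$. This is immediate from the orthonormality of $\{r_k\}$ in $W$ and a count of dimensions, since $\dim V=2(n-1)$.
\item Compute the images: $J(r_k,r_k)/\sqrt2 = E_n(2r_k)/\sqrt2=\sqrt2\,\sigma_k\ell_k$, while $J(r_k,-r_k)=E_n(0)=0$.
\item Conclude that $J|_V=\sum_k (\sqrt2\sigma_k)\,\ell_k\,\big((r_k,r_k)/\sqrt2\big)^\top$ is a genuine SVD. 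Its nonzero singular values are $\sqrt2\cdot2^{-k/2}=2^{(1-k)/2}$ for $k=1,\dots,n-1$, and the anti-diagonal subspace is exactly the kernel.
\end{enumerate}

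As a cross-check, I will also verify $SS^\top=2\,\mathrm{Id}_W$. Indeed $S^\top w=w\oplus w$, so $JJ^\top = 2\,E_n|_W (E_n|_W)^\top$, which gives the same singular values without choosing bases.

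The particular values then follow. Taking $k=1$ gives $\sigma_1=1$, matching \cite[Remark~5.4]{GVS26}. Taking $k=2$ gives $1/\sqrt2$, and the ratio of consecutive singular values is constant, equal to $2^{-1/2}$.

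The only delicate point is the corollary's phrasing in terms of eigenvectors $e_k$ with eigenvalues $2^{-k/2}$. That phrasing is literally valid only if $E_n|_W$ is symmetric (or at least normal), which is part of \Cref{rem:En_sym}, not of \Cref{conj:spectrum}. I will therefore state the result with left and right singular vectors, which needs only \Cref{conj:spectrum}. I will then remark that under the additional symmetry assumption one has $\ell_k=r_k=e_k$, recovering the statement verbatim. No other obstacle arises: once \eqref{eq:sym_jacobian} is granted, everything is finite-dimensional linear algebra.
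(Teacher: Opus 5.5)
Your proposal is correct and is essentially the paper's argument: the corollary is justified by splitting $V$ into diagonal modes $(e_k,e_k)/\sqrt2$ and anti-diagonal modes $(e_k,-e_k)/\sqrt2$ and using $J_{\boxplus_n}[u\oplus v]=E_n(u+v)$. Your switch from eigenvectors to left and right singular vectors of $E_n|_W$ (with the cross-check $JJ^\top = 2\,E_n|_W(E_n|_W)^\top$) is a real tightening, because the paper's eigenvector phrasing tacitly requires orthonormal eigenvectors, i.e.\ the symmetry stated only as a separate conjecture in \Cref{rem:En_sym}, which \Cref{conj:spectrum} alone does not supply.
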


\begin{corollary}[Local stability constant]
\label{cor:stability}
  Assume \Cref{conj:spectrum}. Near the Hermite point,
  the quadratic defect of the Jacobian satisfies
  \begin{equation}\label{eq:contraction}
    \|w\|^2 - \|J_{\boxplus_n}(w)\|^2
    \;=\;
    \sum_{k=1}^{n-1}(1 - 2^{1-k})\,
    \Big\langle w, \frac{(e_k, e_k)}{\sqrt{2}}\Big\rangle^2
    \;+\;
    \sum_{k=1}^{n-1}
    \Big\langle w, \frac{(e_k, -e_k)}{\sqrt{2}}\Big\rangle^2.
  \end{equation}
  Here the anti-diagonal modes lie in the kernel of
  $J_{\boxplus_n}|_V$ and therefore contribute with
  coefficient $1$. The diagonal coefficients are
  $0,\frac12,\frac34,\frac78,\ldots$ for
  $k=1,2,3,4,\ldots$.
  Consequently, on the orthogonal complement of the neutral
  mode $(e_1,e_1)/\sqrt2$ one has the uniform bound
  \[
    \|w\|^2 - \|J_{\boxplus_n}(w)\|^2
    \;\ge\;
    \frac12\,\Big\|P_{\perp}w\Big\|^2,
  \]
  where $P_\perp$ denotes orthogonal projection onto
  $\mathrm{Span}\{(e_1,e_1)/\sqrt2\}^\perp$.
\end{corollary}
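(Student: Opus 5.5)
The plan is to reduce everything to the eigendecomposition supplied by \Cref{cor:svd} and then expand $\|w\|^2-\|J_{\boxplus_n}(w)\|^2$ in that orthonormal basis.

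\textbf{Step 1: an orthonormal basis of $V$.} Under \Cref{conj:spectrum}, together with the symmetry of $E_n$ noted in \Cref{rem:En_sym}, the eigenvectors $\{e_k\}_{k=1}^{n-1}$ form an orthonormal basis of $W$. Symmetry is essential here: it makes the eigenvectors equal the singular vectors and mutually orthogonal, as \Cref{cor:svd} already implicitly assumes. The vectors
\[
d_k:=(e_k,e_k)/\sqrt2, \qquad a_k:=(e_k,-e_k)/\sqrt2
\]
for $k=1,\dots,n-1$ are then $2(n-1)$ orthonormal vectors in $V$. Since $\dim V=2(n-1)$, they form an orthonormal basis of $V$.

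\textbf{Step 2: action of the Jacobian.} By \eqref{eq:sym_jacobian}, $J_{\boxplus_n}[u\oplus v]=E_n(u+v)$. Hence
\[
J_{\boxplus_n}d_k=\sqrt2\,\lambda_k e_k, \qquad J_{\boxplus_n}a_k=0.
\]
Write $w=\sum_k x_k d_k+\sum_k y_k a_k$ with $x_k=\langle w,d_k\rangle$ and $y_k=\langle w,a_k\rangle$. Then $J_{\boxplus_n}w=\sum_k \sqrt2\,\lambda_k x_k e_k$. The $e_k$ are orthonormal, so
\[
\|J_{\boxplus_n}w\|^2=\sum_k 2\lambda_k^2x_k^2=\sum_k 2^{1-k}x_k^2.
\]
Parseval gives $\|w\|^2=\sum_k x_k^2+\sum_k y_k^2$. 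Subtracting yields \eqref{eq:contraction}, with diagonal coefficients $1-2^{1-k}$, namely $0,\tfrac12,\tfrac34,\dots$.

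\textbf{Step 3: the uniform bound.} On $\mathrm{Span}\{d_1\}^\perp$ we have $x_1=0$. Every remaining coefficient, namely $1-2^{1-k}$ for $k\ge2$ and $1$ for the $a_k$, is at least $\tfrac12$. Therefore the defect is at least $\tfrac12\sum_{k\ge2}x_k^2+\tfrac12\sum_k y_k^2=\tfrac12\|P_\perp w\|^2$. This bound is independent of $n$ and is attained on $d_2$.

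\textbf{Main obstacle.} The only delicate point is justifying orthonormality of the $e_k$, i.e.\ passing from singular values to an orthonormal eigenbasis. I would state explicitly that the argument uses symmetry of $E_n$ as in \Cref{rem:En_sym}. Without symmetry, I would replace $e_k$ by right singular vectors $v_k$ and left singular vectors $u_k$. The same computation then goes through with $d_k=(v_k,v_k)/\sqrt2$, because only $\|E_nv_k\|=\sigma_k$ and the orthogonality of the images $E_nv_k$ are used. The formula and the bound are therefore unchanged.

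A second point needs care. The statement says ``near the Hermite point'', but the identity as proved is the exact linear-algebra identity for $J_{\boxplus_n}$ at $(h,h)$. I would present it as such and leave higher-order corrections aside.
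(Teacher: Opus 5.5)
Your proposal is correct and follows the route the paper implicitly takes. The paper gives no separate proof; the identity is $\|w\|^2-\|J_{\boxplus_n}w\|^2$ expanded in the orthonormal singular basis $\{(e_k,\pm e_k)/\sqrt2\}$ of \Cref{cor:svd}, with coefficients $1-\sigma_k^2$, and the constant $\tfrac12$ is then read off from the $k=2$ diagonal mode, exactly as you do. You are right to flag that this needs symmetry of $E_n$ (\Cref{rem:En_sym}), which the paper assumes tacitly when it treats the eigenvectors $e_k$ as an orthonormal system, and your fallback using left and right singular vectors correctly handles the non-symmetric case.
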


\begin{remark}
The smallest nonzero coefficient in the quadratic form is attained on the second diagonal mode $(e_2,e_2)/\sqrt{2}$ where every anti-diagonal mode is penalized more strongly, with coefficient $1$. This bottleneck mode is also responsible for the CLT convergence rate $\lambda_{\mathrm{CLT}} = 1/\sqrt{2}$ in \Cref{cor:clt} where the same eigenvector $e_2$ governs both the sharpest contraction bound and the slowest decay toward Hermite.
\end{remark}

\begin{theorem}[Finite free CLT~{\cite{Marcus21}}]
\label{thm:ff_clt}
The \emph{finite free central limit theorem} states that
iterated variance-normalized self-convolution converges to
the Hermite polynomial.
Define $f_0 := f$ and
\begin{equation}\label{eq:clt_iter}
  f_{k+1}
  \;:=\;
  \tfrac{1}{\sqrt{2}}{}_*(f_k \boxplus_n f_k),
\end{equation}
where $c_* g$ denotes the rescaling $c^n\,g(x/c)$.
Then $f_k \to \mathrm{He}_n$ (appropriately normalized)
as $k \to \infty$ for any real-rooted $f$ with
$\Var(f) = n - 1$.
\end{theorem}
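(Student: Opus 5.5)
The plan is to linearize the iteration~\eqref{eq:clt_iter} using \emph{finite free cumulants}. These are the finite analogue of free cumulants, introduced by Marcus and developed further by Arizmendi and Perales~\cite{AP18}. For a monic degree-$n$ polynomial $f$ with coefficients $a_k$ as in~\eqref{eq:boxplus_coeff}, the finite free cumulants $\kappa_1(f),\dots,\kappa_n(f)$ are defined through the normalized coefficients $\tilde a_k := a_k/\bigl(n^k\binom{n}{k}^{-1}\cdot\tfrac{1}{\,n^{k}}\bigr)$, suitably rescaled, via a moment--cumulant formula over non-crossing partitions with $n$-dependent weights. The key facts I will take from~\cite{AP18} are three.
\begin{enumerate}[label=(\alph*)]
\item \emph{Additivity:} $\kappa_j(f\boxplus_n g)=\kappa_j(f)+\kappa_j(g)$ for every $j$. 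This follows directly from the coefficient formula~\eqref{eq:boxplus_coeff}, which after normalization becomes a binomial-type convolution and is therefore linearized by the exponential-type cumulant transform.
\item \emph{Homogeneity:} $\kappa_j(c_*f)=c^j\kappa_j(f)$, since rescaling roots by $c$ multiplies $a_k$ by $c^k$.
\item \emph{Invertibility:} the coefficients $a_1,\dots,a_n$ are polynomials in $\kappa_1,\dots,\kappa_n$ and conversely, so the cumulant map is a homeomorphism of coefficient space.
\end{enumerate}

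Combining (a) and (b), one step of the iteration gives
\[
\kappa_j(f_{k+1}) = 2^{-j/2}\cdot 2\,\kappa_j(f_k) = 2^{1-j/2}\kappa_j(f_k),
\]
hence $\kappa_j(f_k)=2^{k(1-j/2)}\kappa_j(f)$. For $j\ge 3$ this tends to $0$ geometrically, with the slowest rate $2^{-1/2}$ at $j=3$, consistent with $\lambda_{\mathrm{CLT}}$. For $j=2$ it is invariant. For $j=1$ it grows like $2^{k/2}$, so the statement must be read with $f$ centered: $\kappa_1(f)=a_1/n$ is the root mean, and ``appropriately normalized'' means we first translate to mean zero, which is preserved by the iteration. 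The variance hypothesis $\Var(f)=n-1$ fixes $\kappa_2$, since $\kappa_2$ is an explicit affine function of $a_1^2$ and $a_2$, i.e.\ of the empirical root variance.

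Next I identify the limit. The cumulant vector converges to $(0,\kappa_2(f),0,\dots,0)$. I will check that $\mathrm{He}_n$ has exactly this cumulant vector, i.e.\ that it is the polynomial whose only nonzero finite free cumulant is $\kappa_2$, and that $\Var(\mathrm{He}_n)=n-1$ matches. The cleanest route is to observe that $\mathrm{He}_n$ is a fixed point of the iteration. This fixed-point property is exactly the Hermite equality case behind \Cref{thm:stam}; alternatively it can be read off from $\mathrm{He}_n\boxplus_n\mathrm{He}_n=\sqrt2_*\mathrm{He}_n$ using the explicit coefficients $a_{2m}=(-1)^m\binom{n}{2m}(2m-1)!!$. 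A fixed point must have $\kappa_1=0$ and $\kappa_j=0$ for $j\ge3$ by the scaling law, so the identification follows.

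Finally, by (c) the coefficients of $f_k$ converge to those of $\mathrm{He}_n$. The roots then converge as well: roots of monic polynomials depend continuously on the coefficients (Hurwitz), and all $f_k$ are real-rooted by~\cite{Walsh22}. The main obstacle is step (a), namely verifying the precise normalization of the finite free cumulants so that~\eqref{eq:boxplus_coeff} becomes genuinely additive. I would do this first by showing that the generating function $\sum_k \frac{(n-k)!}{n!}(-1)^k a_k z^k$ is multiplicative under $\boxplus_n$, truncated modulo $z^{n+1}$. Its truncated logarithm then defines additive quantities, which coincide (up to the stated constants) with the $\kappa_j$.
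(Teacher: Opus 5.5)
The paper gives no proof of \Cref{thm:ff_clt}; it is simply quoted from~\cite{Marcus21}. Your argument is correct. It is essentially the standard proof from that literature (linearizing $\boxplus_n$ by the finite $R$-transform, equivalently the finite free cumulants of~\cite{AP18}), specialized to the dyadic iteration. With $\hat a_k := a_k (n-k)!/n!$, formula~\eqref{eq:boxplus_coeff} is literally the Cauchy product $\hat c_k = \sum_{i+j=k}\hat a_i \hat b_j$. Hence the logarithm modulo $z^{n+1}$ is additive, and the iteration becomes $\kappa_j \mapsto 2^{1-j/2}\kappa_j$. You are also right that the statement needs centering, since the $\kappa_1$ mode grows like $2^{k/2}$. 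Every step is algebraic, so real-rootedness is never actually used.

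Compared with the bare citation, your route gives more. The convergence is exactly geometric in cumulant coordinates, globally. Moreover, $\alpha \mapsto (\kappa_1,\dots,\kappa_n)$ is a local diffeomorphism at simple roots (a Vandermonde Jacobian followed by a triangular polynomial automorphism), and it conjugates the iteration to $\operatorname{diag}(2^{1-j/2})_{j=1}^n$. Differentiating this conjugacy at the Hermite fixed point shows that $\sqrt{2}\,E_n$ is similar to that diagonal matrix. Since $\one$ is the $\kappa_1$ direction and $W$ is $E_n$-invariant, $E_n|_W$ has eigenvalues exactly $2^{-k/2}$, $k=1,\dots,n-1$. So the eigenvalue content of \Cref{cor:clt} holds without assuming \Cref{conj:spectrum}; only the singular-value form of that conjecture would still require normality of $E_n$.

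Two small repairs are needed. First, as written, your initial normalization simplifies to $\tilde a_k = a_k\binom{n}{k}$, which does not turn~\eqref{eq:boxplus_coeff} into a product. Use $\hat a_k$ above (equivalently $a_k/\binom{n}{k}$ with an exponential generating function), as you do at the end.

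Second, the Stam equality case cannot supply the fixed-point property. The identity $\mathrm{He}_n \boxplus_n \mathrm{He}_n = (\sqrt{2})_*\mathrm{He}_n$, together with $\Phi_n(c_*f) = c^{-2}\Phi_n(f)$, implies equality in~\eqref{eq:stam}, but not conversely: equality of one scalar functional does not determine the polynomial. Your explicit-coefficient route is the right one, and it is more direct than the fixed-point detour. The odd coefficients of $\mathrm{He}_n$ vanish and $\hat a_{2m}(\mathrm{He}_n) = (-1)^m/(2^m m!)$. So the generating function is $e^{-z^2/2}$ modulo $z^{n+1}$, and its truncated logarithm is exactly $-z^2/2$.
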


\begin{corollary}[Linearized CLT rate]
\label{cor:clt}
  Assume \Cref{conj:spectrum}.
  At the Hermite fixed point, the linearized map for
  the iteration~\eqref{eq:clt_iter} on perturbations
  $\delta \in W$ acts as multiplication by the eigenvalues
  $\sqrt{2} \cdot \sigma_k(E_n|_W) = 2^{(1-k)/2}$.
  Let $W_2 := \mathrm{Span}\{e_2,\dots,e_{n-1}\} = W \cap \mathrm{Span}\{e_1\}^\perp$.
  The spectral radius of the linearized map on $W_2$ is
  \begin{equation}\label{eq:clt_rate}
    \lambda_{\mathrm{CLT}}
    \;=\;
    \sqrt{2}\,\sigma_2(E_n|_W)
    \;=\; \sqrt{2} \cdot \frac{1}{2}
    \;=\; \frac{1}{\sqrt{2}}
    \;\approx\; 0.7071,
  \end{equation}
  independent of $n$.
  Perturbations from the Hermite fixed point that lie in $W_2$
  decay geometrically at rate $(1/\!\sqrt{2})^k$ per iteration.
  Perturbations in the neutral direction $e_1$ (the leading eigenvector)
  are neither amplified nor contracted.
\end{corollary}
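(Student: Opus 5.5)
The plan is to linearize the CLT iteration~\eqref{eq:clt_iter} at the Hermite fixed point and then read off its spectrum from \Cref{conj:spectrum}. I work in root coordinates. Let $T(\alpha) := \tfrac{1}{\sqrt2}\,\Omega_{\boxplus_n}(\alpha,\alpha)$ be the root map of one step. The rescaling $c_*$ multiplies every root by $c$, so this is the correct root-level description. Since $\Var(\mathrm{He}_n)=n-1$ is preserved by $\tfrac{1}{\sqrt2}{}_*(\cdot\boxplus_n\cdot)$ (variances add under $\boxplus_n$ and scale by $c^2$), the Hermite root vector, scaled to variance $n-1$, is a fixed point of $T$. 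The normalization of $h$ only changes the fixed point by a dilation. Moreover, $\Omega_{\boxplus_n}$ is homogeneous of degree one under common dilation, so its Jacobian is scale-invariant, and $E_n$ computed at $h$ coincides with the Jacobian at the rescaled fixed point.

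First, I compute $DT$. By the chain rule applied to the diagonal map $\alpha\mapsto(\alpha,\alpha)$, and by the identity $J_{\boxplus_n}[u\oplus v]=E_n(u+v)$ from~\eqref{eq:sym_jacobian}, I get $DT(h)\,\delta = \tfrac{1}{\sqrt2}E_n(\delta+\delta)=\sqrt2\,E_n\delta$. The space $W$ is $E_n$-invariant, since $E_n$ is doubly stochastic by \cite[Lemma~3.4]{GVS26}. Hence $DT(h)$ restricts to $\sqrt2\,E_n|_W$ on mean-zero perturbations. Mean shifts along $\one$ simply translate, and I quotient them out.

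Second, I diagonalize. Under \Cref{conj:spectrum}, together with the symmetry of $E_n$ that \Cref{cor:svd} implicitly uses by speaking of eigenvectors $e_k$ with eigenvalues $2^{-k/2}$, $E_n|_W$ has an orthonormal eigenbasis $\{e_k\}$ with $E_ne_k=2^{-k/2}e_k$. Therefore $DT(h)e_k=\sqrt2\cdot 2^{-k/2}e_k=2^{(1-k)/2}e_k$, which gives the stated multipliers. The space $W_2=\mathrm{Span}\{e_2,\dots,e_{n-1}\}$ is invariant, and because $DT(h)|_{W_2}$ is symmetric, its spectral radius equals its operator norm. This quantity is $\max_{k\ge2}2^{(1-k)/2}=2^{-1/2}$, independent of $n$. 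Iterating gives $\|DT(h)^k\delta\|\le 2^{-k/2}\|\delta\|$ for $\delta\in W_2$, while $DT(h)e_1=e_1$, which is the neutral direction.

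The main obstacle is the status of $e_1$ and the reliance on symmetry. If $E_n$ is only known to have the conjectured singular values but is not symmetric, then the singular values do not equal the eigenvalues, and the orthogonal eigenbasis fails. I would handle this by stating the corollary under \Cref{conj:spectrum} read as an eigen-decomposition, as \Cref{cor:svd} already does. I would also note that the neutral mode $e_1$ should be the dilation direction $h$ itself. Since $\Omega_{\boxplus_n}$ is $1$-homogeneous, $E_nh=\tfrac12\,\partial_t\Omega(h+th,h+th)|_{t=0}=\tfrac12 h$. Read naively this gives eigenvalue $1/2$, but it is $\sqrt2\cdot 2^{-1/2}\cdot$\dots, so I would carefully check that the variance-fixing in $T$ accounts for the $e_1$ eigenvalue $2^{-1/2}$ of $E_n$. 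Identifying $e_1\propto h$ explains why this mode is neutral: it corresponds to the scale, which is fixed by the constraint $\Var=n-1$. This confirms that the true asymptotic rate on the constrained manifold is governed by $W_2$.
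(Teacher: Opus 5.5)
Your Steps 1--2 are correct and are essentially the paper's own (implicit) argument. You compute $DT(h)\delta=\tfrac{1}{\sqrt2}E_n(\delta+\delta)=\sqrt2\,E_n\delta$ on the invariant subspace $W$, then read off the multipliers $2^{(1-k)/2}$ and the rate $\sqrt2\,\sigma_2=1/\sqrt2$ on $W_2$ from \Cref{conj:spectrum}, with the eigenbasis tacitly assumed exactly as in \Cref{cor:svd}. (On $\one$ the map is multiplication by $\sqrt2$, not a plain translation, which is why one restricts to $W$.)

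The one real slip is in your last paragraph. By Euler's relation, $\partial_t\Omega_{\boxplus_n}(h+th,h+th)|_{t=0}=\Omega_{\boxplus_n}(h,h)=\sqrt2\,h$, not $h$. Hence $E_nh=2^{-1/2}h$, which is consistent with \Cref{lem:score_eigen} since $s_n(h)=\tfrac{n-1}{2}h$. Moreover $DT(h)h=h$ holds simply because every dilate $ch$ is a fixed point of $T$. So $e_1\propto h$ is confirmed, and there is no discrepancy left to check.
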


\begin{remark}
   The rate $\lambda_{\mathrm{CLT}} = 1/\sqrt{2}$ is analogous to the classical spectral picture where under the linearized CLT map, excess cumulants of order $j$ decay as $2^{(1-j)/2}$ per step, giving the same leading rate $1/\sqrt{2}$ for the slowest non-trivia mode~\cite{ABBN04,Barron86}. To our knowledge, the analogous linearized rate in the finite free setting has not been established previously (conditionally on \Cref{conj:spectrum}).
\end{remark}

\begin{figure}[htb!]
\centering
\includegraphics[width=0.72\textwidth]{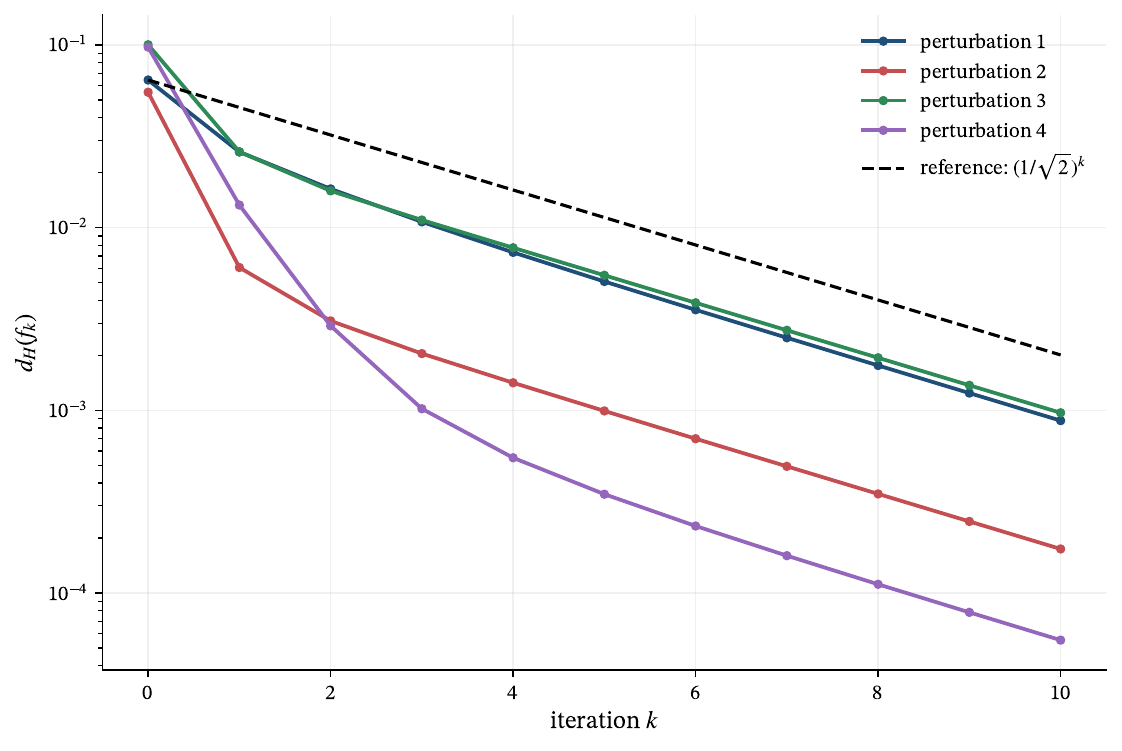}
\caption{Representative convergence trajectories for the
variance-normalized finite free central limit iteration at
$n=20$. Each colored curve tracks the perturbation norm
$d_H(f_k)$ for a different Hermite initialization,
while the dashed line gives the reference decay
$(1/\sqrt{2})^k$ predicted by the conjectural second
singular value. After a short transient, the observed
slopes are consistent with geometric contraction at the
predicted rate.}
\label{fig:clt}
\end{figure}

\Cref{fig:clt} shows how the conjectural spectrum governs the actual self-convolution dynamics, namely after a short transient, trajectories contract at the predicted rate $1/\sqrt{2}$ per doubling step.

\section{Phase Transition for the \texorpdfstring{$p$-Stam}{p-Stam} Inequality}
\label{sec:pstam}

The finite free Stam inequality is intrinsically tied to the
$\ell^2$ norm and the proof of~\cite{GVS26} rests on a
quadratic-form positivity that has no direct analogue for
$\ell^p$ with $p\neq 2$. This raises an immediate question, that is \emph{for which values of $p$ does an analogous $p$-Stam inequality hold?} The answer, supported by FlowBoost's exhaustive
counterexample search and an analytic result at the Hermite
point, is that $p=2$ is a sharp threshold. Based on our FlowBoost and AHT experimental results, we propose the following

\begin{conjecture}[$p$-Stam phase transition]
\label{conj:phase}
  For all $n \ge 3$, the $p$-Stam inequality~\eqref{eq:pstam} holds for all monic real-rooted pairs if and only if $p \in (1, 2]$. That is, the critical exponent is $p^* = 2$, independent of the degree $n$.
\end{conjecture}

\begin{conjecture}[$p$-Stam validity for $p \le 2$]
\label{conj:pstam_holds}
  For all $n \ge 3$ and $p \in (1, 2]$,
  the $p$-Stam inequality~\eqref{eq:pstam} holds for all
  monic real-rooted pairs $(f, g)$ of degree $n$.
\end{conjecture}

\Cref{tab:pstam_gt2} records the most negative deficits found on the tested grid for $p>2$. In this regime the point is not merely that counterexamples exist, the search stage already produces negative samples in every tested cell. Rather, the computation shows that the sign change at $p=2$ is robust, that the optimized negative margins vary only mildly with $n$, and that the strongest counterexamples concentrate near the Hermite diagonal (\Cref{obs:counter_hermite}).

\begin{proposition}[Robust negative $p$-Stam deficits for $p>2$]
\label{prop:pstam_fails}
For each $n \in \{6,8,10,12,20\}$ and each tested $p>2$, the computational pipeline produces explicit polynomial pairs $(\alpha^*,\beta^*)$ with $g_p(\alpha^*,\beta^*)<0$. These negative values remain negative under both standard and high-precision \texttt{mpmath} re-evaluation. On the tested grid, the optimized minima lie between approximately $-0.022$ and $-0.446$ and vary only mildly with $n$ at fixed $p$.
\end{proposition}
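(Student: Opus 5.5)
The statement is computational, so the proof I would give is a certification argument rather than a deductive one. For each cell $(n,p)$ of the tested grid I would exhibit an explicit witness pair and check that its deficit is negative by an evaluation whose error is provably smaller than the margin.

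\textbf{Step 1: produce the witnesses.} For every $n\in\{6,8,10,12,20\}$ and every tested $p>2$, I would run the closed-loop FlowBoost pipeline of \Cref{sec:flowboost} with reward $-g_p$. From the final top-$K$ buffer I would take the best feasible pair $(\alpha^*,\beta^*)$. To make these pairs explicit objects that can be certified, I would round the roots to rationals (finite decimals) while keeping the prescribed minimum separation. A witness is then a finite list of numbers that anyone can re-evaluate, independent of the neural generator. Since the statement only needs existence, a cheaper independent source of witnesses is also available: small perturbations of the Hermite diagonal. The analytic result announced for the Hermite pair (\Cref{prop:hermite_counter}) already predicts negativity there for $p>2$, and \Cref{obs:counter_hermite} says the strongest counterexamples concentrate nearby. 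These points serve as a cross-check that the pipeline's minima are not numerical artifacts.

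\textbf{Step 2: re-evaluate and certify.} For each witness I would compute the monic coefficients exactly from the rational roots. Next I would form the coefficients $c_k$ of $f\boxplus_n g$ exactly via \eqref{eq:boxplus_coeff}; this is rational arithmetic, so it introduces no error. The roots $\gamma$ of the convolution would then be extracted in \texttt{mpmath} at high precision, say 50 to 100 digits. Real-rootedness and simplicity of $\gamma$ follow from \cite{Walsh22} together with a sign-change (Sturm) check. With $\gamma$ in hand I would evaluate $\Phi_{n,p}$ for $f$, $g$ and $f\boxplus_n g$, and from these the deficit $g_p$ of \eqref{eq:gp}. The same quantity would also be computed in standard double precision, and I would record that the two values agree and are negative.

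\textbf{Step 3: robustness and the quantitative claim.} To turn ``negative in \texttt{mpmath}'' into a proof, I would use interval arithmetic. First isolate each $\gamma_i$ in a rigorous interval, for instance by Newton–Kantorovich or by a sign change of the polynomial at the interval endpoints. Then propagate these intervals through the score vectors \eqref{eq:score} and through $\Phi_{n,p}^{-1/(p-1)}$. The root gaps are bounded below by the separation constraint, so the score map is Lipschitz on the isolating boxes, and the resulting error bound is tiny compared with margins of order $10^{-2}$. The quantitative claims are then read off from the certified values. The range $[-0.446,-0.022]$ is the minimum and maximum of the per-cell optimized deficits, and ``varies only mildly with $n$'' refers to the spread across each fixed-$p$ column of \Cref{tab:pstam_gt2}.

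\textbf{Main obstacle.} The hard part is not negativity, which is robust, but conditioning near $p\to2^+$. There the deficit margin shrinks, while the extraction of $\gamma$ for $n=20$ may lose precision if roots cluster. I would handle this first by raising the working precision until successive evaluations stabilize. If that is insufficient, I would use the Hermite-perturbation witnesses, whose deficit sign is controlled analytically by the $\ell^p$-contraction ratio of $E_n$.
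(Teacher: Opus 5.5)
For the sign claims your plan is essentially what the paper does. The proposition has no separate proof; it is supported only by the pipeline's best pairs (\Cref{tab:pstam_gt2}) and an \texttt{mpmath} re-evaluation, and your rational rounding plus interval certification adds rigor the paper does not supply. For the existence part you can skip all of that, including your worry about conditioning near $p=2^+$. Since $\Omega_{\boxplus_n}(h,h)=\sqrt2\,h$ (the Hermite fixed-point property used in \Cref{lem:score_eigen}) and $s_n$ is $(-1)$-homogeneous, $\Phi_{n,p}(\sqrt2\,h)=2^{-p/2}\Phi_{n,p}(h)$, hence
\[
g_p(h,h)=\Phi_{n,p}(h)^{-1/(p-1)}\bigl(2^{p/(2(p-1))}-2\bigr),
\]
which is negative exactly when $p>2$, for every $n\ge 2$. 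So the exact Hermite pair is a witness, not merely its perturbations. (\Cref{prop:hermite_counter} alone gives only the criterion in terms of $\eta_p$; the sign needs $\eta_p=2^{-1/2}$.)

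The genuine gap is in Step 3, where you read the range $[-0.446,-0.022]$ off certified values of $g_p$ as defined in \eqref{eq:gp}. That functional is homogeneous of degree $p/(p-1)>0$ under a common dilation of the pair. Once a single negative pair exists, $g_p$ is therefore unbounded below, and a numerical range of ``optimized minima'' means something only in a fixed gauge, which your plan never specifies.

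The tabulated numbers appear to be values of $\widetilde g_p=c_{n,p}^{-1/(p-1)}g_p$ on per-polynomial unit-variance roots, not values of \eqref{eq:gp}. Take the unit-variance Hermite pair at $(n,p)=(6,2.0625)$, where $\tfrac{2}{n-1}s_n(h)=h$. There $\widetilde g_p\approx-0.040$, which is the tabulated value, while \eqref{eq:gp} gives about $-1.2\times10^{-3}$, outside the claimed range. Certifying \eqref{eq:gp} literally would therefore not reproduce the quantitative sentence. You must record the witnesses in the pipeline's normalization and certify $\widetilde g_p$.

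Relatedly, your Hermite cross-check would not confirm the $n=20$ column. The same computation gives $\widetilde g_p(h,h)\approx-0.040$ at $(20,2.0625)$, below the tabulated $-0.0222$. Those entries are thus not the infimum over unit-variance pairs. The claim that the minima vary only mildly with $n$ can be certified only as a description of what the optimizer returned, not of the true minimum in that gauge.
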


\Cref{tab:pstam_gt2} reports these optimized minima. Every entry is negative.

\begin{table}[htb!]
\centering
\caption{Minimum $p$-Stam deficit $g_p$ for $p > 2$.
All entries are negative.}
\label{tab:pstam_gt2}
\begin{tabular}{@{}r r r r r r@{}}
\toprule
$p$ & $n = 6$ & $n = 8$ & $n = 10$ & $n = 12$ & $n = 20$ \\
\midrule
2.0625 & $-0.0400$ & $-0.0400$ & $-0.0394$ & $-0.0379$
       & $-0.0222$ \\
2.125  & $-0.0744$ & $-0.0743$ & $-0.0738$ & $-0.0719$
       & $-0.0585$ \\
2.25   & $-0.1303$ & $-0.1302$ & $-0.1296$ & $-0.1275$
       & $-0.1114$ \\
2.5    & $-0.2086$ & $-0.2084$ & $-0.2082$ & $-0.2051$
       & $-0.1838$ \\
3.0    & $-0.2987$ & $-0.2983$ & $-0.2972$ & $-0.2949$
       & $-0.2775$ \\
4.0    & $-0.3819$ & $-0.3814$ & $-0.3811$ & $-0.3787$
       & $-0.3627$ \\
6.0    & $-0.4455$ & $-0.4457$ & $-0.4437$ & $-0.4448$
       & $-0.4259$ \\
\bottomrule
\end{tabular}
\end{table}

\begin{figure}[htb!]
\centering
\includegraphics[width=0.96\textwidth]{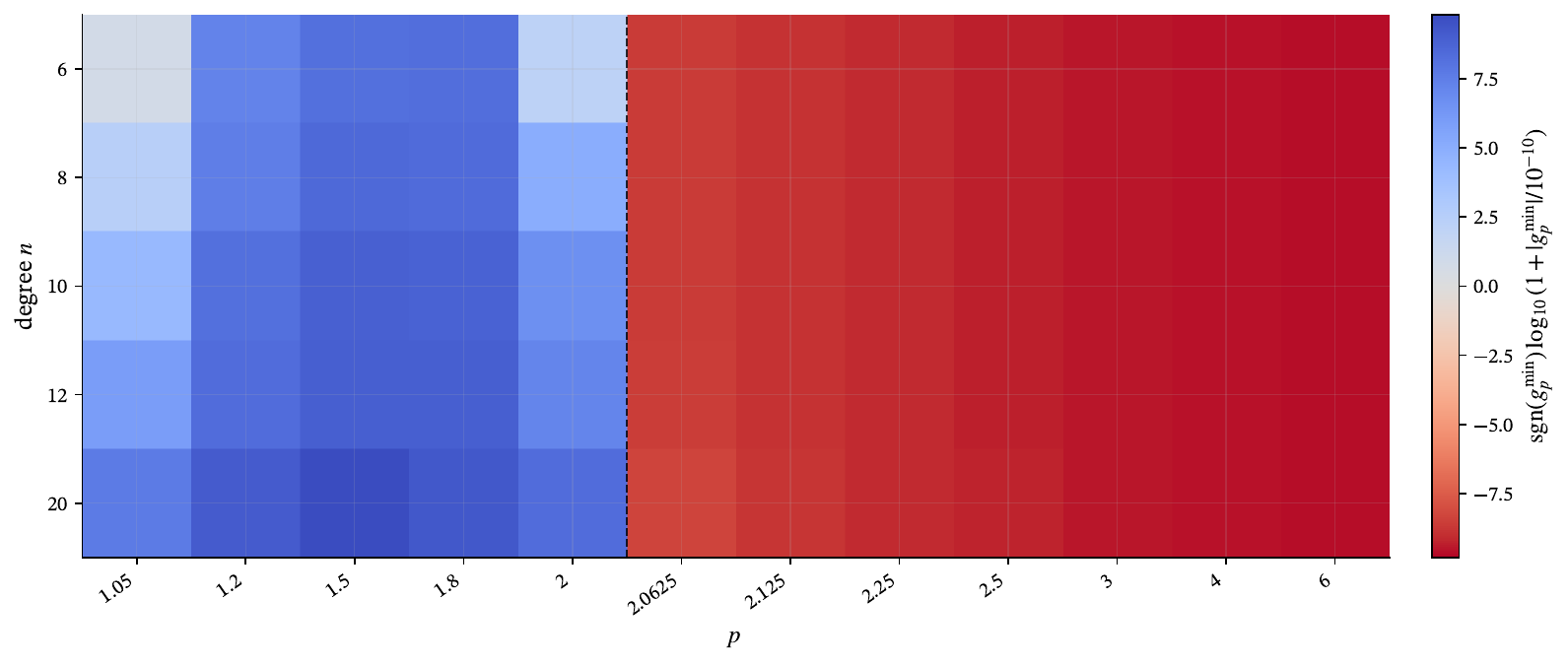}
\caption{Global computational phase diagram for the
$p$-Stam problem on the tested $(n,p)$ grid. The color in
each cell is $\operatorname{sgn}(g_p^{\min})\log_{10}(1+|g_p^{\min}|)$,
so the sign records whether the best observed deficit is positive or negative and the magnitude records its logarithmic size. The sign transition occurs at $p=2$ on the
tested grid, while the post-processed numerical reporting in
the text and tables is given in terms of the additive deficit $g_p$. Visually, the figure separates the observed validity
region $p \le 2$ from the counterexample region $p > 2$
in a single degree-uniform picture.}
\label{fig:phase_heatmap}
\end{figure}

\Cref{fig:phase_heatmap} summarizes the computational evidence on the tested grid.

\begin{observation}[Counterexamples concentrate at Hermite]
\label{obs:counter_hermite}
  For all $p > 2$, the counterexamples minimizing $g_p$
  have Hermite distance $D \sim 10^{-5}$--$10^{-3}$ and
  mismatch $d_{PQ} \sim 10^{-4}$--$10^{-2}$ (\Cref{tab:pstam_gt2_structure}). That is, the worst-case configurations for $p > 2$ are Hermite matching pairs.
\end{observation}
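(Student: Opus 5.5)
The statement to support is \Cref{obs:counter_hermite}: for $p>2$ the deficit-minimizing configurations sit at the Hermite matching diagonal. This is an empirical observation, so the "proof" has two parts. The first is a verification of the reported distances. The second, more substantive, is an analytic explanation of why the Hermite diagonal is where $g_p$ becomes negative. I would proceed in three steps.

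\textbf{Step 1 (numerical audit).} For each cell $(n,p)$ with $n\in\{6,8,10,12,20\}$ and $p>2$ in \Cref{tab:pstam_gt2}:
\begin{enumerate}[nosep,label=(\roman*)]
\item take the optimizer $(\alpha^*,\beta^*)$ returned by FlowBoost;
\item normalize each root vector via \eqref{eq:normalize};
\item evaluate $D$ and $d_{PQ}$ from \eqref{eq:D_dPQ} in \texttt{mpmath};
\item confirm that these values fall in the stated ranges $10^{-5}$--$10^{-3}$ and $10^{-4}$--$10^{-2}$.
\end{enumerate}
This is routine and only certifies the quoted magnitudes.

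\textbf{Step 2 (compute $g_p$ on the Hermite diagonal).} Take $f=g=\mathrm{He}_n$ with roots $h$. The exact identity $\mathrm{He}_n\boxplus_n\mathrm{He}_n=\sqrt2_*\mathrm{He}_n$ gives $\gamma=\sqrt2\,h$, so the score of the convolution is $s(\gamma)=s(h)/\sqrt2$. Then
\[
\Phi_{n,p}(f\boxplus_n g)=2^{-p/2}\Phi_{n,p}(\mathrm{He}_n),
\]
and therefore
\[
g_p(\mathrm{He}_n,\mathrm{He}_n)=\Phi_{n,p}(\mathrm{He}_n)^{-1/(p-1)}\bigl(2^{p/(2(p-1))}-2\bigr).
\]
This is negative exactly when $p/(2(p-1))<1$, that is, when $p>2$. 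The matching Hermite pair is therefore itself a counterexample for every $p>2$, and by dilation invariance so is every common affine image of it. This is precisely the configuration with $D=d_{PQ}=0$.

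\textbf{Step 3 (why the optimizer stays near the diagonal).} Here I would show that $(h,h)$ is a strict local minimizer of the scale-free deficit $\rho_p$ modulo the common affine symmetries. The plan is to expand $\rho_p$ to second order around $(h,h)$.

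The first-order variation is handled through the Jacobian $J_{\boxplus_n}[u\oplus v]=E_n(u+v)$ from \eqref{eq:sym_jacobian}. Since $s(h)$ is a multiple of $h$ (the Hermite roots satisfy $s_i(h)=h_i$ up to scaling), the stationarity condition reduces to checking that $E_n^\top$ applied to $|s|^{p-2}s$ is compatible with the scaling direction.

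The second-order term involves the operator $E_n$ restricted to $W$ together with the Hessian of $\|s\|_p^p$. Assuming \Cref{conj:spectrum} and the symmetry in \Cref{rem:En_sym}, I would try to show this Hessian is positive on the complement of the affine orbit.

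\textbf{Main obstacle.} Step 3 is the main obstacle and probably cannot be completed unconditionally. The $\ell^p$ Hessian is not diagonal in the eigenbasis $\{e_k\}$ of $E_n$, and second-order terms of the root map (the $H_{\boxplus_n}$ Hessians) also enter. My fallback is to prove only Step 2 rigorously. Step 3 would then be supported numerically: compute the Hessian of $\rho_p$ at $(h,h)$ in high precision for the tested $n$ and verify positive definiteness transverse to the symmetry orbit. The remaining small nonzero $D$ and $d_{PQ}$ values would be attributed to optimizer tolerance and the minimum-separation constraint.
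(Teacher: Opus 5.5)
Your Steps 1 and 2 are fine, and together they are all the paper itself offers. The observation is only the numerical record of \Cref{tab:pstam_gt2_structure}, backed analytically by \Cref{prop:hermite_counter} and \Cref{lem:score_eigen}. Your Step 2 is a cleaner route to the same formula: it uses $\gamma=\sqrt2\,h$ and the $(-1)$-homogeneity of $s_n$ directly, so neither the transport identity $s_n(\gamma)=E_n s_n(h)$ nor $\eta_p$ is needed.

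The gap is Step 3. Its target is false for every $n\ge4$ and $p\neq2$, and it fails at first order, not at the Hessian. Since $Ds(\alpha)=-L(\alpha)$, the graph Laplacian with weights $(\alpha_i-\alpha_j)^{-2}$, the relevant dual vector is $v:=L(h)\bigl(|s|^{p-2}s\bigr)$ rather than $|s|^{p-2}s$. Use that $\nabla\bigl(\Phi_{n,p}^{-1/(p-1)}\bigr)$ is homogeneous of degree $1/(p-1)$ and that $\rho_p(h,h)=2^{(2-p)/(2(p-1))}-1$. Then
\[
  A_p\,\nabla_\alpha\rho_p(h,h)=A_p\,\nabla_\beta\rho_p(h,h)
  =c_{n,p}\,\bigl(E_n^\top-2^{-1/2}\bigr)v,
  \qquad c_{n,p}>0 ,
\]
so $(h,h)$ is critical iff $E_n^\top v=2^{-1/2}v$. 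Under the very assumptions you invoke (\Cref{conj:spectrum}, \Cref{rem:En_sym}, \Cref{lem:score_eigen}), this eigenspace is $\mathrm{span}(h)$. Since $L(h)h=s(h)\propto h$ and $L(h)$ is invertible on $\one^\perp$, criticality would force $|s|^{p-2}s\parallel h$, i.e.\ all nonzero $|h_i|$ equal. That fails as soon as $n\ge4$.

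These assumptions are not essential to see the obstruction. One has $f\boxplus_n\mathrm{He}_n=e^{-\partial_x^2/2}f$, and the roots of $e^{-t\partial_x^2/2}f$ move by $\dot y=s(y)$. Along the Hermite trajectory $y(t)=\sqrt{1+t}\,x$ ($x$ the roots of $\mathrm{He}_n$), the linearization is $-L(x)/(1+t)$. Hence $E_n=2^{-L(x)}$ is symmetric, and criticality reduces to the checkable condition $L(x)u=u/2$ for $u=|x|^{p-2}x$, which already fails for $n=4$, $p=3$.

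Consequences:
\begin{itemize}
\item $\nabla\rho_p(h,h)\neq0$, with a nonzero component along some diagonal mode $(e_k,e_k)$, $k\ge2$. Your fallback Hessian check at $(h,h)$ would therefore certify nothing.
\item The nonzero $D$ in \Cref{tab:pstam_gt2_structure} is not optimizer tolerance. If the $p=2$ Hessian is nondegenerate transversally to the common affine orbit, the implicit function theorem gives a diagonal branch of critical points $(h+\delta_p,h+\delta_p)$ with $\delta_p=O(p-2)$, generically nonzero. So $D$ is of order $(p-2)^2$, consistent with the $p=2.0625$ data, which is the only exponent that table records.
\end{itemize}

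What your approach can actually establish is Step 2 together with this asymptotic statement as $p\to2^+$. The literal claim that the worst cases \emph{are} Hermite pairs for all $p>2$ cannot come from a local-minimality argument, because $(h,h)$ is not even a critical point of $\rho_p$.
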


\begin{table}[htb!]
\centering
\caption{Structure of the best counterexamples for $p > 2$.}
\label{tab:pstam_gt2_structure}
\begin{tabular}{@{}r r c c@{}}
\toprule
$n$ & $p$ & $D(\alpha^*,\beta^*)$ & $d_{PQ}(\alpha^*,\beta^*)$ \\
\midrule
6 & 2.0625 & $1.4 \times 10^{-5}$ & $1.6 \times 10^{-4}$ \\
8 & 2.0625    & $1.1 \times 10^{-5}$ & $9.3 \times 10^{-5}$ \\
10 & 2.0625 & $2.2 \times 10^{-5}$ & $1.5 \times 10^{-3}$ \\
12 & 2.0625 & $1.5 \times 10^{-4}$ & $1.2 \times 10^{-2}$ \\
20 & 2.0625 & $9.3 \times 10^{-4}$ & $2.4 \times 10^{-2}$ \\
\bottomrule
\end{tabular}
\end{table}

\begin{proposition}[Hermite deficit for $p \neq 2$]
\label{prop:hermite_counter}
  Assume $n \ge 2$.
  Let $h^{(n)} \in \R^n$ denote the ordered roots of
  $\mathrm{He}_n$, normalized to mean $0$ and variance $1$,
  and let $s_n(h^{(n)}) \in \R^n$ be its score vector.
  Let $E_n \in \R^{n \times n}$ be the Hermite coupling
  matrix defined in~\eqref{eq:En}, so that at the
  symmetric point $(\alpha,\beta) = (h^{(n)}, h^{(n)})$
  one has
  \[
    s_n(\gamma) = E_n\,s_n(h^{(n)}),
    \qquad
    \gamma
    \;:=\;
    \Omega_{\boxplus_n}\!\big(h^{(n)}, h^{(n)}\big).
  \]
  Define the $\ell^p$-contraction ratio
  \begin{equation}\label{eq:contraction_ratio}
    \eta_p
    \;:=\;
    \frac{\|E_n\,s_n(h^{(n)})\|_p}
         {\|s_n(h^{(n)})\|_p}.
  \end{equation}
  Then the $p$-Stam deficit at the Hermite pair is
  \begin{equation}\label{eq:gp_hermite}
    g_p\!\big(h^{(n)}, h^{(n)}\big)
    \;=\;
    \|s_n(h^{(n)})\|_p^{-p/(p-1)}
    \Big[(\eta_p^p)^{-1/(p-1)} - 2\Big].
  \end{equation}
  In particular:
  \begin{enumerate}[nosep, label=(\alph*)]
    \item
      $g_p\!\big(h^{(n)}, h^{(n)}\big) = 0
      \;\Longleftrightarrow\;
      \eta_p^p = 2^{-(p-1)}
      \;\Longleftrightarrow\;
      \eta_p = 2^{-1 + 1/p}$.
    \item
      $g_p\!\big(h^{(n)}, h^{(n)}\big) < 0
      \;\Longleftrightarrow\;
      \eta_p^p > 2^{-(p-1)}$.
    \item
      $g_p\!\big(h^{(n)}, h^{(n)}\big) > 0
      \;\Longleftrightarrow\;
      \eta_p^p < 2^{-(p-1)}$.
  \end{enumerate}
  For $p = 2$, the finite free Stam inequality gives
  $g_2\!\big(h^{(n)}, h^{(n)}\big) = 0$, hence
  \[
    \eta_2^2 = 2^{-1}
    \qquad\text{and}\qquad
    \eta_2 = 2^{-1/2}.
  \]
\end{proposition}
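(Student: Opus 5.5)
The proof is essentially a direct substitution. It uses the relation $s_n(\gamma)=E_n\,s_n(h^{(n)})$, which the statement takes as given at the symmetric point.

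\textbf{Step 1: the two input terms.} I write $s:=s_n(h^{(n)})$ and $N:=\|s\|_p$. Since $\alpha=\beta=h^{(n)}$, definition~\eqref{eq:Phi_p} gives
\[
\Phi_{n,p}(f)=\Phi_{n,p}(g)=N^p .
\]
So the two subtracted terms in~\eqref{eq:gp} together contribute $-2N^{-p/(p-1)}$.

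\textbf{Step 2: the convolution term.} I use the stated identity for the score of $\gamma$. This gives $\Phi_{n,p}(f\boxplus_n g)=\|E_n s\|_p^p=\eta_p^pN^p$, where $\eta_p$ is well defined because $s\neq 0$ for $n\ge 2$ (the extreme score entry is nonzero). Then
\[
\Phi_{n,p}(f\boxplus_n g)^{-1/(p-1)}=(\eta_p^p)^{-1/(p-1)}\,N^{-p/(p-1)} .
\]
Factoring out $N^{-p/(p-1)}$ yields~\eqref{eq:gp_hermite}. This requires $\eta_p>0$. That holds because $E_n$ is doubly stochastic and nonnegative (Lemma~3.4 of~\cite{GVS26}), and because the score vector of the simple-rooted polynomial $\gamma$ is nonzero.

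\textbf{Step 3: the sign cases.} The prefactor $N^{-p/(p-1)}$ is strictly positive, so the sign of $g_p$ equals the sign of $(\eta_p^p)^{-1/(p-1)}-2$. For $p>1$ the map $t\mapsto t^{-1/(p-1)}$ is strictly decreasing on $(0,\infty)$. It takes the value $2$ exactly at $t=2^{-(p-1)}$. Hence $g_p=0$, $g_p<0$, $g_p>0$ correspond respectively to $\eta_p^p=2^{-(p-1)}$, $\eta_p^p>2^{-(p-1)}$, $\eta_p^p<2^{-(p-1)}$, which are items (a)–(c). Taking $p$-th roots in (a) gives $\eta_p=2^{-1+1/p}$.

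\textbf{Step 4: the case $p=2$.} Here \Cref{obs:hermite} (equality in \Cref{thm:stam} for the Hermite pair, \cite[Theorem~1.4]{GVS26}) gives $g_2=0$, and the normalization constant does not affect the sign. Item (a) then gives $\eta_2^2=1/2$, so $\eta_2=2^{-1/2}$.

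The only delicate point is the score identity $s_n(\gamma)=E_n s_n(h^{(n)})$ itself, since $E_n$ is defined as a Jacobian and not as a score transfer. The statement takes this identity as a hypothesis, so I would simply cite it. If it had to be justified, I would use the Jacobian identity of~\cite{GVS26} relating score vectors through $\Omega_{\boxplus_n}$, specialized to the diagonal using~\eqref{eq:sym_jacobian}. This is the step where care is needed: any factor-$2$ convention from summing the two partial Jacobians must be absorbed consistently into $E_n$.
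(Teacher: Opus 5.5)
Your proposal is correct and follows the paper's proof essentially line by line. You substitute the score identity to get $\Phi_{n,p}(h\boxplus_n h)=\eta_p^p\|s\|_p^p$, factor out $\|s\|_p^{-p/(p-1)}$, read off the sign from the strict monotonicity of $t\mapsto t^{-1/(p-1)}$, and invoke the Hermite equality case of \cite[Theorem~1.4]{GVS26} at $p=2$. Your extra check that $\eta_p>0$, which the paper omits, follows from $E_n s=s_n(\gamma)\neq 0$ alone; the double stochasticity of $E_n$ plays no role there, since a doubly stochastic matrix such as $\frac1n\mathbf{1}\mathbf{1}^\top$ annihilates mean-zero vectors like $s$.
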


\begin{proof}
  Set $h := h^{(n)}$, $s := s_n(h)$, and
  $\gamma := \Omega_{\boxplus_n}(h, h)$.
  By the one sided score transport identity at the symmetric point
  ($a=1,b=0$ in~\cite[Remark~3.6]{GVS26}), one has
  \[
    s_n(\gamma)
    \;=\;
    E_n\,s_n(h)
    \;=\;
    E_n\,s.
  \]
  Therefore
  \[
    \Phi_{n,p}(h \boxplus_n h)
    \;=\;
    \|s_n(\gamma)\|_p^p
    \;=\;
    \|E_n\,s\|_p^p.
  \]
  On the other hand,
  $\Phi_{n,p}(h) = \|s\|_p^p$.
  By definition of the $p$-Stam deficit,
  \[
    g_p(h, h)
    \;=\;
    \frac{1}{\Phi_{n,p}(h \boxplus_n h)^{1/(p-1)}}
    \;-\;
    \frac{1}{\Phi_{n,p}(h)^{1/(p-1)}}
    \;-\;
    \frac{1}{\Phi_{n,p}(h)^{1/(p-1)}}.
  \]
  Substituting gives
  \[
    g_p(h, h)
    \;=\;
    \frac{1}{\|E_n\,s\|_p^{p/(p-1)}}
    \;-\;
    \frac{2}{\|s\|_p^{p/(p-1)}}.
  \]
  Now write $A := \|s\|_p^p > 0$ and
  $\|E_n\,s\|_p^p = \eta_p^p\,A$.
  Then
  \[
    g_p(h, h)
    \;=\;
    (\eta_p^p\,A)^{-1/(p-1)} - 2\,A^{-1/(p-1)}
    \;=\;
    A^{-1/(p-1)}
    \Big[(\eta_p^p)^{-1/(p-1)} - 2\Big],
  \]
  which proves~\eqref{eq:gp_hermite}.
  Since $A^{-1/(p-1)} > 0$, the sign of $g_p(h, h)$ is
  the sign of $(\eta_p^p)^{-1/(p-1)} - 2$.
  Because $p > 1$, the map $x \mapsto x^{-1/(p-1)}$ is
  strictly decreasing on $(0, \infty)$.
  Hence
  \[
    g_p(h, h) = 0
    \;\;\Longleftrightarrow\;\;
    (\eta_p^p)^{-1/(p-1)} = 2
    \;\;\Longleftrightarrow\;\;
    \eta_p^p = 2^{-(p-1)}.
  \]
  This proves part~(a). Parts~(b) and~(c) follow immediately from the same strict monotonicity. Finally, for $p = 2$, the finite free Stam inequality yields $g_2(h, h) \ge 0$, and \cite[Theorem~1.4]{GVS26} shows that the Hermite pair is an equality case, so $g_2(h, h) = 0$. Applying part~(a) with $p = 2$ gives $\eta_2^2 = 2^{-1}$, as claimed.
\end{proof}

\begin{lemma}[Score eigenvector]
\label{lem:score_eigen}
  Let $h = h^{(n)}$ and $s = s_n(h)$.  Then
  $s$ is an eigenvector of $E_n$ with eigenvalue $2^{-1/2}$:
  \[
    E_n\,s \;=\; 2^{-1/2}\,s.
  \]
\end{lemma}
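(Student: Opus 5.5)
The plan is to combine the score transport identity already used in the proof of \Cref{prop:hermite_counter} with the stability of Hermite polynomials under $\boxplus_n$. That proof records, via \cite[Remark~3.6]{GVS26} at the symmetric point, that
\[
  s_n(\gamma) \;=\; E_n\, s_n(h), \qquad \gamma := \Omega_{\boxplus_n}(h,h).
\]
So it suffices to identify $\gamma$ explicitly and compute $s_n(\gamma)$ directly.

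\textbf{Step 1: $\gamma = \sqrt2\, h$.} I will show that the roots of $h\boxplus_n h$ are the rescaled Hermite roots. Write $H_t(x) := t^{n/2}\mathrm{He}_n(x/\sqrt t)$ for the Hermite polynomial of variance parameter $t$; its root vector is $\sqrt t$ times that of $\mathrm{He}_n$. The claim is the finite free semigroup property $H_t \boxplus_n H_u = H_{t+u}$. I would prove it from the coefficient formula \eqref{eq:boxplus_coeff}:
\begin{enumerate}[nosep,label=(\roman*)]
  \item The coefficients of $H_t$ vanish in odd degree, and in even degree $2m$ they equal $(-1)^m t^m \frac{n!}{(n-2m)!\,2^m m!}$, up to the sign convention $(-1)^k a_k$.
  \item Substituting these into \eqref{eq:boxplus_coeff}, the factorials collapse and the convolution sum becomes a binomial expansion of $(t+u)^m$.
\end{enumerate}
Equivalently, one can use that $H_t = \exp\!\big(-\tfrac t2 \partial_x^2\big)x^n$ and that $\boxplus_n$ with $\exp(-\tfrac t2\partial_x^2)x^n$ acts as the operator $\exp(-\tfrac t2\partial_x^2)$, so the two heat flows compose. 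Since $h$ is the root vector of some $H_{t_0}$ (mean zero, variance one), $h\boxplus_n h$ has roots $\sqrt{2}\,h$. This matches variance additivity, the second finite free cumulant being additive.

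\textbf{Step 2: scaling of the score.} The score \eqref{eq:score} is homogeneous of degree $-1$, so
\[
  s_n(\sqrt2\,h) \;=\; 2^{-1/2}\, s_n(h).
\]
Combining this with the transport identity gives $E_n s = 2^{-1/2}s$, which is the claim of \Cref{lem:score_eigen}.

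\textbf{Consistency checks.} Two facts confirm the result fits the rest of the paper.
\begin{itemize}[nosep]
  \item It reproduces $\eta_2 = 2^{-1/2}$ from \Cref{prop:hermite_counter}, and indeed gives $\eta_p = 2^{-1/2}$ for every $p$.
  \item The Stieltjes relations $\sum_{j\ne i} 1/(h_i-h_j) = c\, h_i$ for Hermite roots show $s\propto h$, which is mean-zero. Hence $s\in W$, consistent with $s$ being the leading eigenvector $e_1$ in \Cref{cor:svd}.
\end{itemize}

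\textbf{Main obstacle.} I expect the hardest step to be bookkeeping the conventions in the transport identity rather than the Hermite semigroup itself. The check needed is that \cite[Remark~3.6]{GVS26}, specialized to $a=1$, $b=0$ at $(h,h)$, really gives $s_n(\gamma)=E_n s_n(h)$ with the same normalization of $E_n$ as in \eqref{eq:En}, and not, for example, with a factor from splitting between both arguments. If the remark instead gives $s_n(\gamma) = E_n\big(a\,s_n(h) + b\,s_n(h)\big)$ for $a+b=1$, the conclusion is the same, since both arguments carry the same score.
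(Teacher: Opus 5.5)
Your proposal is correct and follows essentially the same route as the paper. Both arguments combine $\Omega_{\boxplus_n}(h,h)=\sqrt2\,h$, the one-sided transport identity $s_n(\gamma)=E_n s$, and the $(-1)$-homogeneity of $s_n$; the paper takes the first fact from the CLT fixed-point property in \cite{Marcus21}, whereas you verify it directly via $H_t\boxplus_n H_u=H_{t+u}$. Your normalization $s_n(\gamma)=E_n s$ is the consistent one: the paper's written proof has a stray factor $2$ in an intermediate line, which taken literally would give eigenvalue $2^{-3/2}$. The factor-free identity is what the chain rule gives along $f\mapsto f\boxplus_n H_t$, under which roots move with velocity $s_n$, using $(f\boxplus_n H_t)\boxplus_n g=(f\boxplus_n g)\boxplus_n H_t$.
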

\begin{proof}
  By the CLT fixed-point property~\cite{Marcus21},
  $\Omega_{\boxplus_n}(h,h) = \sqrt{2}\,h$.
  The score transport identity (cf.~\cite[Remark~3.6]{GVS26})
  at the symmetric diagonal gives
  $s_n\!\bigl(\Omega_{\boxplus_n}(h,h)\bigr) = 2\,E_n\,s$.
  By the $(-1)$-homogeneity of the score vector under scaling,
  $s_n\!\bigl(\Omega_{\boxplus_n}(h,h)\bigr) = E_n\,s$.
  Combining gives $E_n\,s = 2^{-1/2}\,s$.
\end{proof}

The sign of the deficit in \Cref{prop:hermite_counter} is
now determined by \Cref{lem:score_eigen}.  Since
$E_n s = 2^{-1/2}s$, the contraction ratio defined
in~\eqref{eq:contraction_ratio} satisfies
$\eta_p = 2^{-1/2}$ independently of $p$ and $n$.
The critical threshold separating positive from negative
deficit is $\eta^* = 2^{-1 + 1/p}$, which is a strictly
decreasing function of $p$ that crosses $2^{-1/2}$
exactly at $p = 2$.  For $p < 2$ the threshold lies above
$\eta_p$, giving $g_p > 0$; for $p > 2$ it drops below,
giving $g_p < 0$.  \Cref{conj:spectrum} is not required for
this argument and it enters only when studying the action of
$E_n$ on perturbations beyond the score direction $s_n(h)$.

\subsection{Evidence for the $p \le 2$ validity}

For \texorpdfstring{$p \in \{1.05, 1.2, 1.5, 1.8, 2.0\}$}{p in {1.05, 1.2, 1.5, 1.8, 2.0}}, FlowBoost
consistently finds $g_p > 0$ across all tested degrees
(\Cref{tab:pstam_lt2}).

\begin{table}[htb!]
\centering
\caption{Minimum $p$-Stam deficit $g_p$ for the tested
subcritical grid and the quadratic endpoint.
All entries are positive (inequality supported).}
\label{tab:pstam_lt2}
\begin{tabular}{@{}r r r r r r@{}}
\toprule
$p$ & $n = 6$ & $n = 8$ & $n = 10$ & $n = 12$ & $n = 20$ \\
\midrule
1.05 & $4.47 \times 10^{-10}$ & $2.40 \times 10^{-8}$
     & $1.66 \times 10^{-6}$ & $8.22 \times 10^{-5}$
     & $3.90 \times 10^{-3}$ \\
1.2 & $1.81 \times 10^{-3}$ & $3.09 \times 10^{-3}$
    & $1.43 \times 10^{-2}$ & $2.21 \times 10^{-2}$
    & $1.17 \times 10^{-1}$ \\
1.5 & $1.46 \times 10^{-2}$ & $2.92 \times 10^{-2}$
    & $6.70 \times 10^{-2}$ & $8.50 \times 10^{-2}$
    & $6.48 \times 10^{-1}$ \\
1.8 & $1.69 \times 10^{-2}$ & $2.56 \times 10^{-2}$
    & $5.62 \times 10^{-2}$ & $8.66 \times 10^{-2}$
    & $1.81 \times 10^{-1}$ \\
2.0 & $1.30 \times 10^{-8}$ & $1.09 \times 10^{-5}$
    & $3.94 \times 10^{-4}$ & $1.38 \times 10^{-3}$
    & $1.94 \times 10^{-2}$ \\
\bottomrule
\end{tabular}
\end{table}

At $p = 2.0$, the minimum deficit approaches zero as FlowBoost converges toward the Hermite equality case, consistent with \Cref{thm:stam}. For $p < 2$, all observed minima remain positive.

\section{Extremizer Bifurcation for $p < 2$}
\label{sec:extremizers_p}
For $p<2$, the extremal configurations are not small perturbations of the Hermite pair detectable by local analysis around $p=2$. They are qualitatively different objects, namely non-matching polynomial pairs $(f \not\approx g)$ whose root distributions are bimodal, with roots clustering into two separated groups. In the strongly subcritical regime these configurations remain well away from the Hermite diagonal, so they are inaccessible to local analysis at $p=2$ and must be located by simulation-based optimization over the full $\R^{2n}$ landscape.
For instance, at $p=1.05$ the optimizer produces near-zero deficits at low degree while maintaining the same two-block geometry.

We apply AHT (\Cref{sec:families}) to interpret these configurations. FlowBoost elites are scored by invariant descriptors, stable patterns across the $(n,p)$ grid are promoted to conjectures, and inconsistent patterns are rejected. The two-block below are abductive hypothesis infered from the full extremal dataset, not outputs of any single sample.

For each $(n, p)$ with $p \in \{1.05, 1.2, 1.5, 1.8\}$,
the retained best configurations are analyzed along two
axes. Each root vector is matched against the family
library of~\Cref{sec:families} via~\eqref{eq:dfamily}
under two normalization modes: \emph{Mode~A} normalizes
each polynomial independently to mean~$0$, variance~$1$
before matching, while \emph{Mode~B} applies a common
affine transformation to the pair jointly and reports
the joint residual~$d_{\mathrm{joint}}$. Alongside
family fitting, we classify reflection and pairing
symmetries at thresholds $t \in \{0.05, 0.1, 0.2\}$,
recording S1~(per-polynomial palindromicity),
S2~(per-polynomial centrosymmetry),
S3~($\alpha \approx \beta$, diagonal matching), and
S4~($\alpha \approx -\mathrm{rev}(\beta)$,
anti-diagonal matching). Tracking the best-fitting
family, family residual, mismatch~$d_{PQ}$, Hermite
distance~$D$, and symmetry fractions as $p$ decreases
from~$2$ gives a continuous picture of the extremizer
bifurcation; the results are collected in
\Cref{tab:extremizer_full}. Three qualitatively distinct
regimes emerge.

\begin{table}[htb!]
\centering
\caption{Full extremizer structure as a function of $(n, p)$.
``2BU'' = two-block uniform, ``He'' = Hermite,
Consistency: fraction of the best set for which the
stated family achieves the lowest $d_{\mathrm{joint}}$.}
\label{tab:extremizer_full}
\small
\begin{tabular}{@{}r r l c c c c c@{}}
\toprule
$n$ & $p$ & Best & Med.\ $d_{\mathrm{jt}}$
 & Consist.\ & Best $d_{PQ}$ & Best $D$
    & Sym.\ (S3, $t\!=\!0.1$) \\
\midrule
6  & 1.05 & 2BU & 0.178 & 1.000 & 0.507 & 0.275 & 0\% \\
6  & 1.2 & 2BU & 0.444 & 1.000 & 0.496 & 0.263 & 0\% \\
6  & 1.5 & 2BU & 0.470 & 1.000 & 0.705 & 0.530 & 0\% \\
6  & 1.8 & He  & 0.034 & 0.281 & 0.020 & $4\!\times\!10^{-4}$ & 94\% \\
\midrule
8  & 1.05 & 2BU & 0.617 & 0.750 & 0.761 & 0.619 & 0\% \\
8  & 1.2 & 2BU & 0.603 & 0.926 & 0.842 & 0.647 & 0\% \\
8  & 1.5 & 2BU & 0.490 & 0.953 & 0.561 & 0.314 & 8\% \\
8  & 1.8 & 2BU & 0.092 & 0.609 & 0.774 & 0.590 & 56\% \\
\midrule
10 & 1.05 & 2BU & 0.557 & 1.000 & 0.504 & 0.633 & 0\% \\
10 & 1.2 & 2BU & 0.618 & 0.984 & 0.550 & 0.504 & 0\% \\
10 & 1.5 & 2BU & 0.553 & 0.969 & 1.067 & 1.052 & 5\% \\
10 & 1.8 & He  & 0.064 & 0.375 & 0.061 & 0.003 & 69\% \\
\midrule
12 & 1.05 & 2BU & 0.642 & 1.000 & 0.500 & 0.628 & 0\% \\
12 & 1.2 & 2BU & 0.618 & 1.000 & 0.602 & 0.329 & 0\% \\
12 & 1.5 & 2BU & 0.578 & 1.000 & 0.530 & 0.981 & 0\% \\
12 & 1.8 & He  & 0.063 & 0.344 & 0.096 & 0.008 & 69\% \\
\midrule
20 & 1.05 & 2BU & 0.595 & 1.000 & 0.602 & 0.753 & 0\% \\
20 & 1.2 & 2BU & 0.615 & 1.000 & 0.450 & 0.698 & 0\% \\
20 & 1.5 & 2BU & 0.301 & 0.953 & 0.121 & 0.377 & 12\% \\
20 & 1.8 & 2BU & 0.097 & 0.797 & 0.059 & 0.004 & 56\% \\
\bottomrule
\end{tabular}
\end{table}

To complement the residual-based ranking with a conditional statistical screen, we also evaluate the e-values of \eqref{eq:evalue_screen} where the numerical table given in \Cref{app:evalues}. On the tested subcritical grid, the two-block-uniform family decisively rejects the Hermite baseline and the e-value analysis supports the claim that the $p<2$ branch is non-Hermite, while the descriptor-based AHT comparison indicates that the two-block template is only a coarse surrogate for the underlying structure.

\begin{figure}[htb!]
\centering
\includegraphics[width=\textwidth]{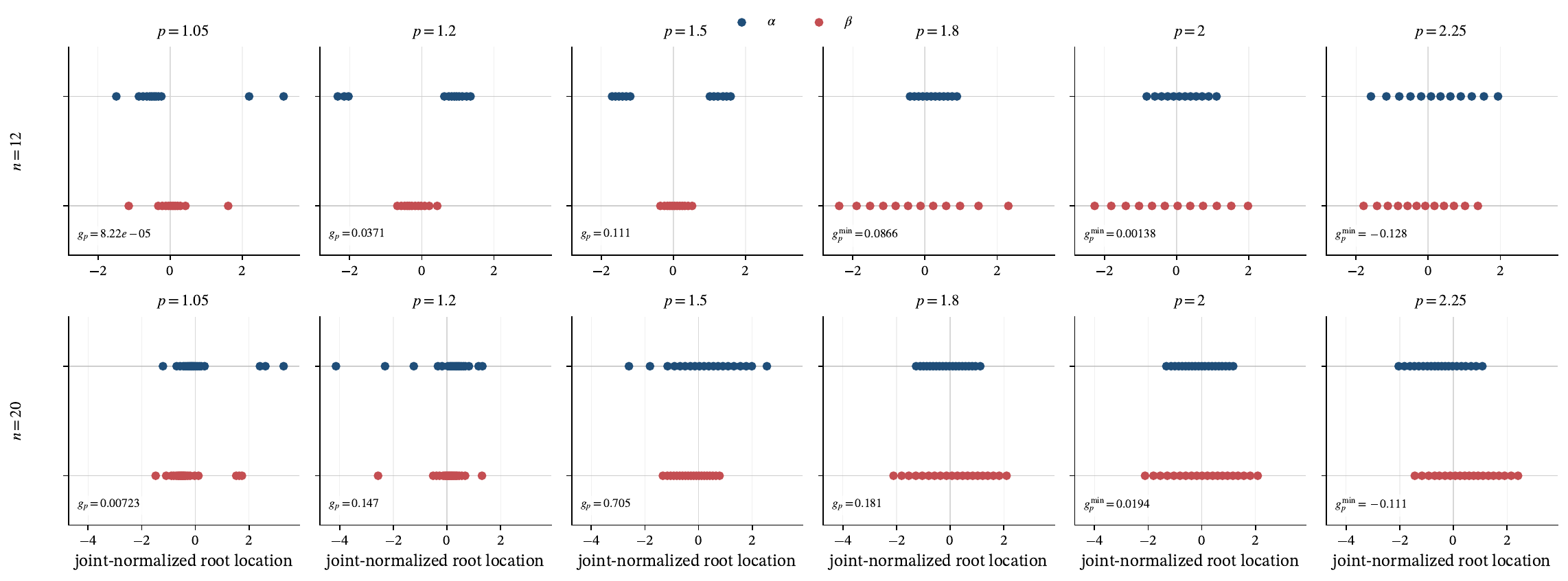}
\caption{Root-configuration atlas for the best observed
configurations at two representative degrees,
$n=12$ and $n=20$, across the transition in~$p$. In each
panel, the blue and red rows show the jointly normalized
root locations of $\alpha$ and $\beta$, respectively,
while the annotated value gives the best observed
additive deficit $g_p$ at that parameter pair. Read from left to right,
the atlas exhibits a clear geometric progression:
non-matching clustered configurations for $p<2$, a
transitional Hermite regime around $p=1.8$, the
Hermite diagonal at $p=2$, and Hermite matching
counterexamples for $p>2$.}
\label{fig:root_atlas}
\end{figure}

\Cref{fig:root_atlas} is the most direct visual evidence
for extremizer bifurcation. It shows that the subcritical extremizers
are not merely noisy perturbations of the Hermite pair,
but qualitatively different configurations whose geometry
deforms continuously and then collapses back to the
Hermite diagonal as $p \to 2$.

\paragraph{Regime I: Deeply bimodal, non-matching
  ($p \in \{1.05,1.2,1.5\}$).}
At $p \in \{1.05,1.2,1.5\}$, the extremizers exhibit
three consistent features across all tested $n$:
(i)~\emph{bimodal root structure} where the two-block
uniform family is the best fit in at least $75\%$ of
retained samples (and in over $95\%$ at all degrees except
$n=8, p=1.05$), with root vectors separating into two
well-separated clusters, 
(ii)~\emph{non-matching pairs} where the mismatch $d_{PQ}$
ranges from $0.12$ to $1.07$, indicating that $f$ and $g$
are structurally distinct, and
(iii)~\emph{absence of diagonal symmetry} where the S3
fraction ($\alpha \approx \beta$) is $0\%$ at tolerance
$t = 0.1$ across all tested pairs.

\paragraph{Regime II: Transitional ($p \approx 1.8$).}
At $p = 1.8$, the pattern is degree-dependent. For
$n \in \{6, 10, 12\}$, the Hermite template is already
the best-fitting family, with $d_{\mathrm{joint}}$
between $0.034$ and $0.064$.
At $n \in \{8, 20\}$, the residual is larger ($d_{\mathrm{joint}} \approx 0.09$--$0.10$), retaining a subcritical signature. This degree-dependent variation marks $p = 1.8$ as a transition band rather than a sharp universal switch: the geometry has moved substantially toward Hermite at most degrees, but the approach is not uniform in $n$.

\paragraph{Regime III: Hermite ($p = 2$).}
At $p = 2$, the extremizer collapses to the unique
matching Hermite pair, with $D \to 0$ and $d_{PQ} \to 0$
(\Cref{sec:hermite}).

The Hermite median $d_{\mathrm{joint}}$ (distance from the best set to the Hermite family) provides a
single-number summary of the bifurcation.
\Cref{tab:hermite_distance_evolution} shows that these
Hermite residuals remain large throughout the strongly
subcritical regime and then collapse sharply by $p=1.8$,
confirming a continuous but highly nonuniform deformation
toward the quadratic endpoint.

\begin{table}[htb!]
\centering
\caption{Hermite median $d_{\mathrm{joint}}$ (distance of best set to Hermite) as a function of $p$.}
\label{tab:hermite_distance_evolution}
\begin{tabular}{@{}r c c c c@{}}
\toprule
$n$ & $p = 1.05$ & $p = 1.2$ & $p = 1.5$ & $p = 1.8$ \\
\midrule
6  & 0.506 & 0.516 & 0.505 & 0.034 \\
8  & 0.712 & 0.643 & 0.548 & 0.112 \\
10 & 0.723 & 0.776 & 0.740 & 0.064 \\
12 & 0.796 & 0.773 & 0.740 & 0.063 \\
20 & 0.739 & 0.727 & 0.394 & 0.111 \\
\bottomrule
\end{tabular}
\end{table}

The $p=1.05$ case is very interesting, showing that even when the objective value is nearly zero at low degree, the Hermite median residual remains of order $0.5$--$0.8$. Thus the equality at small $p$ does not force Hermite geometry. What changes near $p=2$ is not only the size of the deficit, but the shape class of the extremal configurations.

\begin{figure}[htb!]
\centering
\includegraphics[width=\textwidth]{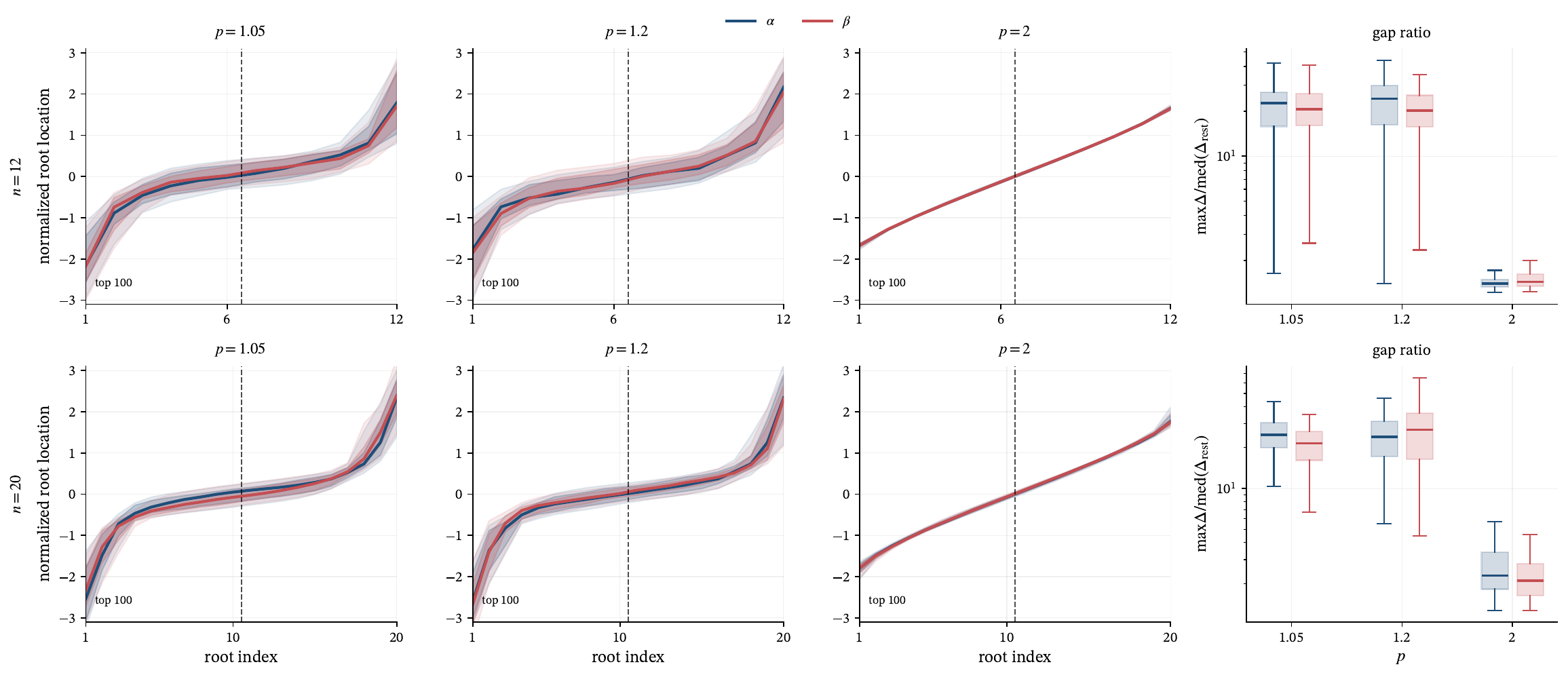}
\caption{Population-level order-statistic bands and gap
analysis for the top-$100$ elite samples at each displayed $(n,p)$ pair. In the first three columns, each polynomial is normalized independently to
mean $0$ and unit variance, and then the median together
with the $25$--$75\%$ and $10$--$90\%$ bands of the
$i$-th sorted root are plotted separately for
$\alpha$ and $\beta$. A two-block population appears here
as a stable jump between root indices $n/2$ and
$n/2+1$. The rightmost column reports the elite
distribution of the scale-free gap statistic
$\max \Delta/\operatorname{med}(\Delta_{\mathrm{rest}})$,
where $\Delta$ denotes adjacent normalized root gaps. Both
diagnostics separate the subcritical cases
$p=1.05,1.2$ from the quadratic endpoint $p=2.0$.}
\label{fig:population_order_statistics}
\end{figure}

\Cref{fig:population_order_statistics} complements
\Cref{fig:root_atlas} by
showing the subcritical deformation directly at the level
of order statistics, rather than through a pooled marginal
density. This is important because it is a population statistic. The figure shows that the structural transition persists after aggregation over samples, which makes the bifurcation phenomena more robust as a collective effect.

\subsection{The imperfection of the two-block fit}

\label{sec:two_cluster}

Although two-block uniform is the best-fitting tested family for $p \in \{1.05, 1.2, 1.5, 1.8\}$, the fit quality is not perfect and the median $d_{\mathrm{joint}}$ ranges from $0.18$ to $0.62$. This indicates that the true extremal configurations have a more refined structure than simple uniform spacing within each block. At present, however, we do not have a model that is simultaneously supported by
the e-value-base AHT and by the \(p\)-Stam objective itself across the tested grid. We
therefore record the two-block pattern only as a coarse description and leave the precise subcritical extremal configuration family open.

\begin{conjecture}[$p$-Stam extremizer bifurcation]
\label{conj:extremizer_transition}
  For $n \ge 3$, the extremizers of the $p$-Stam inequality~\eqref{eq:pstam} undergo a bifurcation as $p$ decreases from $2$:
  \begin{enumerate}[nosep, label=(\roman*)]
    \item \textbf{Non-matching:} for $p \in (1, 2)$,
      the normalized extremizers satisfy $f \not\approx g$, with mismatch
      $d_{PQ} > 0$ that increases as $p \to 1^+$.
    \item \textbf{Bimodal:} each polynomial's root
      distribution is bimodal, with roots clustering into
      two groups separated by a gap. The bimodality intensifies as $p \to 1^+$.
    \item \textbf{Continuous transition:} as $p \to 2^-$,
      the bimodal structure merges, the mismatch vanishes,
      and the extremizers converge continuously to the
      matching Hermite pair
      $(\mathrm{He}_n, \mathrm{He}_n)$.
  \end{enumerate}
\end{conjecture}

\begin{figure}[htb!]
\centering
\includegraphics[width=\textwidth]{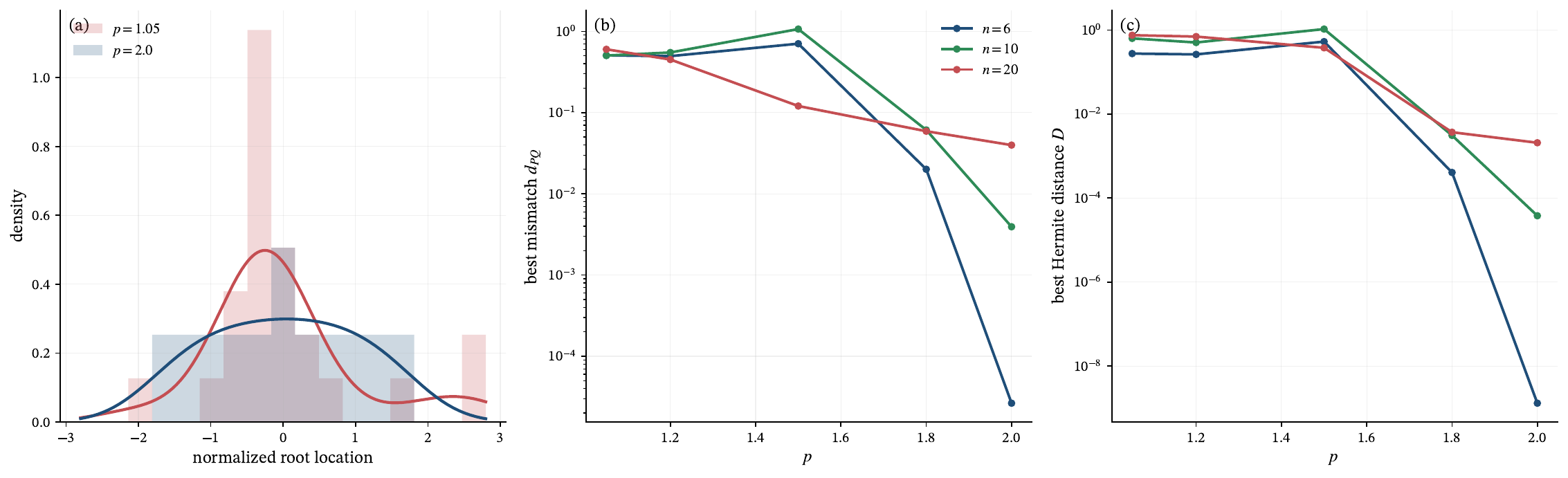}
\caption{Compact summary of the subcritical bifurcation.
Panel~(a) contrasts a representative low-$p$ root
distribution with the quadratic Hermite geometry;
Panel~(b) tracks the best pair-mismatch statistic
$d_{PQ}$ across~$p$; and Panel~(c) tracks the best Hermite
distance~$D$. Together the panels summarize the two main
structural signatures of the transition: mismatch and
non-Hermite geometry both persist well below $p=2$ and
collapse rapidly as $p \to 2^-$.}
\label{fig:extremizer_transition}
\end{figure}

\Cref{fig:extremizer_transition} summarizes the geometric transition seen in \Cref{fig:root_atlas}. The minimum deficits found by FlowBoost on the tested subcritical grid are reported in \Cref{tab:pstam_lt2}. These values remain positive, but some are at the same scale as the numerical recovery error observed at the known equality case $p=2$. Accordingly, the table should be read only as lower-envelope evidence, not as numerical proof of strictness. The more robust conclusion is that for $p<2$, the minimizers found by FlowBoost remain non-Hermite even when the deficit is very small, and return toward the Hermite branch as $p\to 2$.

\begin{conjecture}[$p$-Stam strict inequality]
\label{conj:strict}
  For every $p \in (1,2)$, every $n \ge 3$, and every
  pair of monic real-rooted polynomials $f, g$ of
  degree $n$, $g_p(f,g) > 0.$ In particular, the $p$-Stam inequality admits no
  equality case for $p \in (1,2)$.
\end{conjecture}

\begin{remark}
\label{rem:strict_vs_extremizers}
The strictness conjecture is consistent with the non-Hermite extremizer
geometry observed in \Cref{sec:extremizers_p}. In particular, the best
$p<2$ candidates found by FlowBoost are typically non-diagonal
($f\neq g$ after normalization) and exhibit clustered or bimodal
root structure, rather than collapsing onto the Hermite equality regime
seen at $p=2$. Any equality case for $p<2$, if it exists, must lie strictly off the Hermite diagonal.
\end{remark}

\subsection{The $p \to 1^+$ regime}
\label{sec:p_to_1}
The limit $p \to 1^+$ is numerically singular. The exponent $1/(p-1)$ diverges, so small multiplicative changes in the normalized Fisher-information terms are strongly amplified in the deficit. Our computations do not identify a closed-form limiting functional, and we do not claim a rigorous minimax description of this regime. What the data do show is more modest and more robust. At $p=1.05$, the best observed deficits can already be very small at low degree, yet the associated best elite populations remain decisively non-Hermite and are still best fit by the two-block template across the tested degrees. Thus the strongly subcritical regime is not a small perturbation of the Hermite equality geometry. Any future analysis of the $p \to 1^+$ limit will therefore have to explain how the equality can coexist with a global two-cluster structure.

\section{Conclusion}\label{sec:discussion}
This work demonstrates that closed-loop generative optimization, can be an effective tool for mathematical discovery. The four contributions of this paper span two qualitatively different modes of mathematical discovery. The spectral conjecture and the Hermite deficit formula were identified through numerical pattern finding once FlowBoost pointed to the relevant configuration. The extremal configuration bifurcation for $p < 2$ emerged from global adversarial search with no prior indication of where to look. The key paradigm is \emph{adversarial probing}: rather than asking ``does this inequality hold?'', FlowBoost asks ``where does this inequality most nearly fail?'' and ``what do the extremal configurations look like?'' These are global optimization questions whose answers reveal structure, phase transitions, extremizer families, and spectral gaps. Our goal has been to show that FlowBoost can serve both as a guide for analytic work, by pointing to the right configurations, and as a discovery framework for new phenomena.

\subsection{Future Directions}\label{sec:future}
The results of this paper raise several natural problems for future work. \Cref{conj:spectrum} posits that $\spec(E_n|_W)=\{2^{-k/2}:k=1,\ldots,n-1\}$ independently of $n$. A proof would require understanding the spectral theory of the linearized finite free convolution map at the Hermite diagonal, and in particular why the dyadic pattern persists across all polynomial degrees. Moreover, \Cref{conj:pstam_holds} asserts that the finite free Stam inequality holds for all $\ell^p$-Fisher information with $p \in (1,2]$. The GVS26 proof at $p=2$ uses a quadratic Hessian positivity that does not extend directly to $p \ne 2$. A proof for $p < 2$ would require a new positivity mechanism. The e-value-based AHT screen robustly supports a two-block structure for $p \lesssim 1.5$ but the two-block-uniform template has high residuals and are too simple. The precise closed-form description of the $p < 2$ extremizer family is open. Lastly, \Cref{rem:uniqueness} records the numerical evidence that the Hermite diagonal is the unique equality case of \Cref{thm:stam}. The GVS26 proof does not address uniqueness. A proof would follow from a strict-convexity refinement of the Hessian positivity argument away from the Hermite orbit.

\appendix
\section{Further Computational Details}

\subsection{Numerical computation for \Cref{conj:spectrum}}
\label{app:spectrum_audit}

This appendix records the computation underlying the
numerical evidence for \Cref{conj:spectrum}. The
conjecture concerns the action of $E_n$ on
$W=\one^\perp$, so we audit the induced operator on $W$
directly rather than reconstructing the full ambient
Jacobian numerically. At the Hermite diagonal the
convolution output is the rescaled Hermite configuration
$\Omega_{\boxplus_n}(h^{(n)},h^{(n)})=\sqrt2\,h^{(n)}$~\cite{Marcus21}.
If $P(x)$ denotes the convolution polynomial with roots
$\gamma_i=\sqrt2\,h_i^{(n)}$, then for a perturbation of a
single input root the corresponding coefficient
perturbation $\delta P$ is explicit, and implicit
differentiation of $P(\gamma_i)=0$ gives $\delta \gamma_i \;=\; -\frac{\delta P(\gamma_i)}{P'(\gamma_i)}.$
Applying this formula coordinatewise yields a
representative Jacobian block at the Hermite point. Its
ambient matrix is defined only up to a rank-one
translation-gauge term, but the induced operator on
$W=\one^\perp$ is canonical, which is exactly the object
relevant to \Cref{conj:spectrum}. We compute this restricted operator with multiprecision
coefficient arithmetic using \texttt{mpmath}~\cite{mpmath}.
The audited degrees are
$n\in\{10,20,30,40,50,60,70,80,90\}$, with working
precision increased from $60$ digits at $n=10$ to
$220$ digits at $n=90$. For each degree we compute the
first ten singular values of $E_n|_W$ and compare them to
the dyadic target $2^{-k/2}$. The resulting maximum
relative errors are
$1.33\times10^{-14},\,5.48\times10^{-13},\,8.69\times10^{-12},\,8.02\times10^{-11},\,2.13\times10^{-9},\,1.15\times10^{-7},\,1.05\times10^{-6},\,5.77\times10^{-5},\,4.56\times10^{-4}$
for
$n=10,20,30,40,50,60,70,80,90$, respectively. These are
the values summarized in \Cref{tab:spectrum} and
\Cref{fig:spectrum}.

\subsection{E-value based Abductive Hypothesis Testing}
\label{app:evalues}

This appendix records the supplementary e-value analysis
used in \Cref{sec:extremizers_p}. The e-values are defined
in \eqref{eq:evalue_screen} under the working model that,
for fixed $(n,p)$, the retained best elite configuration is
exchangeable and generated from a fixed law. The comparison is performed samplewise using the Mode~A joint residuals. For the Hermite-null screen, the indicator is
$Z_m^{\mathcal H}=\mathbf{1}\{R_{\mathcal H}(x^{(m)})<
R_{\mathrm{He}}(x^{(m)})\}$ after removing ties. The
reported e-values should therefore be read as conditional
model-comparison diagnostics in the sense of Vovk and
Wang~\cite{VovkWang21}, not as certificates for the
adaptive FlowBoost search itself.

\begin{table}[htb!]
\centering
\caption{Conditional e-values for the one-sided screen
$H_{0}^{\mathrm{He}}(\mathrm{2BU};n,p):
\mathbb{P}(R_{\mathrm{2BU}}<R_{\mathrm{He}})\le 1/2$
against the alternative that two-block uniform beats
Hermite more than half the time on the top-$512$ elite.
$M_{\mathrm{eff}}$ is the number of non-tied comparisons
after removing samples for which both residuals are equal.
Cells where the e-value saturates at $5.28\times10^{36}$ correspond
to all non-tied comparisons won by two-block uniform.
The decision column uses the $5\%$ e-value rule $E\ge 20$.}
\label{tab:evalue_tb_vs_he}
\scriptsize
\setlength{\tabcolsep}{8pt}
\begin{tabular}{@{}r c c c c@{}}
\toprule
$n$ & $p$ & $E$ & Reject $H_0$ & Favoured model \\
\midrule
6  & 1.05 & $5.28\times 10^{36}$ & Yes & Two-block uniform \\
6  & 1.2  & $5.28\times 10^{36}$ & Yes & Two-block uniform \\
6  & 1.5  & $5.28\times 10^{36}$ & Yes & Two-block uniform \\
6  & 1.8  & 6.831              & No  & Inconclusive \\
6  & 2.0  & 0.008               & No  & Hermite \\
\midrule
8  & 1.05 & $4.95\times 10^{29}$ & Yes & Two-block uniform \\
8  & 1.2  & $9.73\times 10^{26}$ & Yes & Two-block uniform \\
8  & 1.5  & $9.73\times 10^{26}$ & Yes & Two-block uniform \\
8  & 1.8  & 35.81                & Yes & Two-block uniform \\
8  & 2.0  & 0.008               & No  & Hermite \\
\midrule
10 & 1.05 & $6.49\times 10^{32}$ & Yes & Two-block uniform \\
10 & 1.2  & $6.49\times 10^{32}$ & Yes & Two-block uniform \\
10 & 1.5  & $4.95\times 10^{29}$ & Yes & Two-block uniform \\
10 & 1.8  & 5.679              & No  & Inconclusive \\
10 & 2.0  & 0.008               & No  & Hermite \\
\midrule
12 & 1.05 & $5.28\times 10^{36}$ & Yes & Two-block uniform \\
12 & 1.2  & $5.28\times 10^{36}$ & Yes & Two-block uniform \\
12 & 1.5  & $5.28\times 10^{36}$ & Yes & Two-block uniform \\
12  & 1.8  & 23.65                & Yes & Two-block uniform \\
12 & 2.0  & 0.008              & No  & Hermite \\
\midrule
20 & 1.05 & $5.28\times 10^{36}$ & Yes & Two-block uniform \\
20 & 1.2  & $5.28\times 10^{36}$ & Yes & Two-block uniform \\
20 & 1.5  & $9.73\times 10^{26}$ & Yes & Two-block uniform \\
20 & 1.8  & 10.18              & No  & Inconclusive \\
20 & 2.0  & 0.008              & No  & Hermite \\
\bottomrule
\end{tabular}
\end{table}

The table shows the intended use of the screen. Even though two-block uniform is only a coarse surrogate, it already rejects the Hermite explanation throughout the tested subcritical grid, with the weakest evidence at the lowest degrees near the transition band. We use this evidence to reinforce the rejection of the Hermite branch, while leaving the precise refined subcritical family open.

\subsection{FlowBoost Stages: A diagnosis}
\Cref{fig:optimization_trace} is a methodological diagnostic result. Here, we want to explain how FlowBoost should be interpreted as a closed-loop staged search procedure in which derivative-free extremal discovery, learned proposal distributions, and local refinement each serve distinct roles in driving the search toward the best observed configurations.

\begin{figure}[htb!]
\centering
\includegraphics[width=0.82\textwidth]{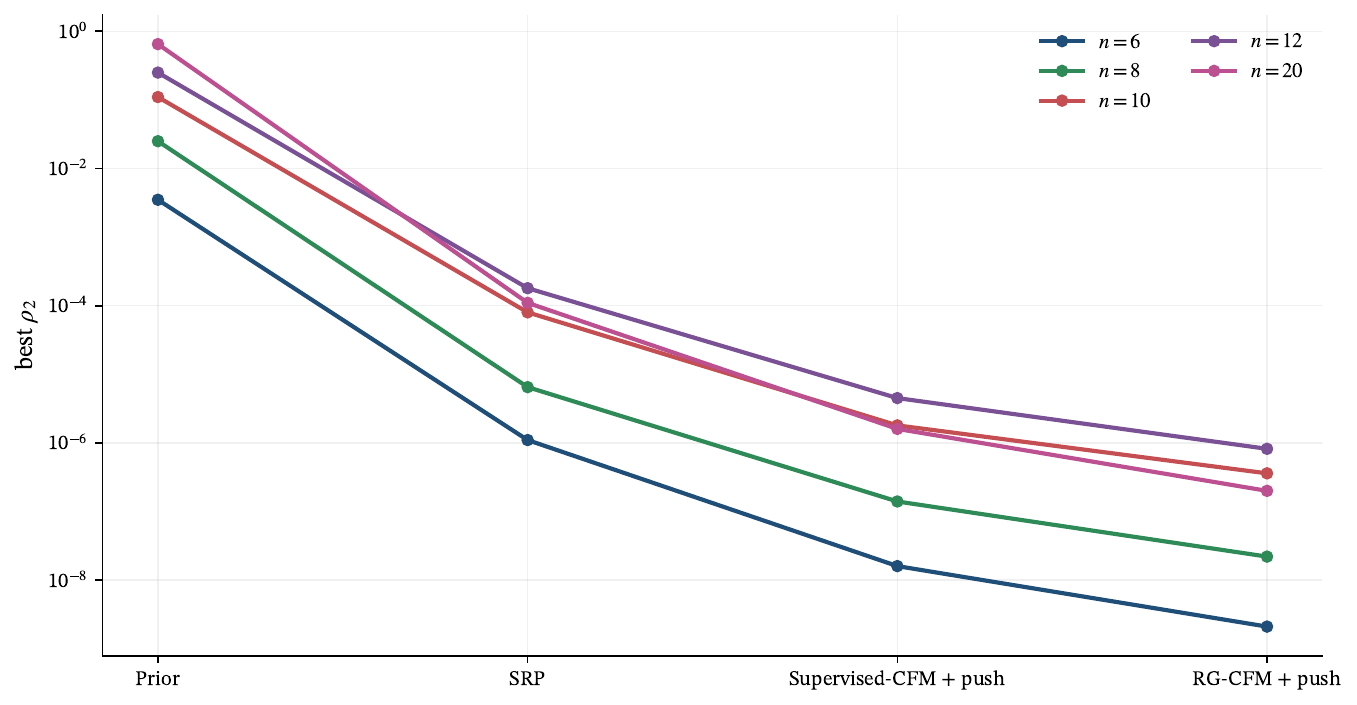}
\caption{Stage-wise results for the $p=2$ FlowBoost pipeline at representative degrees $n=6,8,10,12,20$. SRP first identifies a high-quality feasible region,
supervised CFM learns a proposal distribution from that region, and reward-guided fine-tuning sharpens the proposal distribution further so that the subsequent final push attains the best values.}
\label{fig:optimization_trace}
\end{figure}

\FloatBarrier

\bibliographystyle{unsrt}
\bibliography{references}

\end{document}